\def\N{\mathbb N}
\providecommand{\abs}[1]{\left\lvert#1\right\rvert}
\providecommand{\norm}[1]{\left\lVert#1\right\rVert}
\providecommand{\ip}[2]{\left\langle #1, #2 \right\rangle}
\providecommand{\Fix}{\operatorname{F}}
\numberwithin{equation}{section}
\theoremstyle{plain}
\newtheorem{theorem}{Theorem}[section]
\newtheorem{lemma}[theorem]{Lemma}
\theoremstyle{remark}
\newtheorem{remark}[theorem]{Remark}
\title[nonexpansive mappings and quasinonexpansive mappings]{%
Approximation of fixed points of nonexpansive mappings and
quasinonexpansive mappings in a Hilbert space}
\author{Koji~Aoyama}
\address[K.~Aoyama]{%
Graduate School of Social Sciences, Chiba University, 
Yayoi-cho, Inage-ku, Chi\-ba-shi, Chiba 263-8522, Japan}
\email{aoyama@le.chiba-u.ac.jp}
\keywords{fixed point, quasinonexpansive mapping, approximation algorithm}
\subjclass[2010]{47J25, 47J20, 47H09}
\begin{document}

\begin{abstract}
 In this paper, we give a simple proof and some generalizations 
 of results in Falset et al.~\cite{MR3453020}.
\end{abstract}

\maketitle

\section{Introduction}

The aim of this paper is to give a simple proof and some generalizations 
of results in Falset, Llorens-Fuster, Marino, and
Rugiano~\cite{MR3453020}. 
In particular, we focus on the following:
\begin{theorem}[\cite{MR3453020}*{Theorem 3}] \label{t:Falset-et_al}
 Let $H$ be a Hilbert space, $C$ a nonempty closed convex subset of $H$, 
 $T\colon C\to C$ a nonexpansive mapping, $S\colon C \to C$ a strongly
 quasinonexpansive mapping, $u$ a point in $C$, 
 $\{\alpha_n\}$ and $\{\beta_n\}$ sequences in $[0,1]$,
 and $\{x_n\}$ a sequence in $C$ defined by $x_1 \in C$ and
 \begin{equation}\label{e:Falset_etal-org}
  x_{n+1} = \alpha_n u + (1-\alpha_n) \left[
      \beta_n Tx_n + (1-\beta_n) Sx_n \right]
 \end{equation}
 for $n \in \N$.
 Suppose that $F = \Fix(T) \cap \Fix(S) \ne \emptyset$, 
 $I-S$ is demiclosed at $0$, $\alpha_n \to 0$, and
 $\sum_{n=1}^\infty \alpha_n = \infty$. 
 Then the following hold: 
 \begin{enumerate}
  \item If $\sum_{n=1}^\infty (1-\beta_n) < \infty$ and
	$\sum_{n=1}^\infty \abs{\alpha_{n+1} - \alpha_n} < \infty$, then 
	$\{x_n\}$ converges strongly to $P_{\Fix(T)}(u)$; 
  \item if $\sum_{n=1}^\infty \beta_n < \infty$, $\alpha_n > 0$ for all
	$n \in \N$, and $\beta_n/\alpha_n \to 0$, then
	$\{x_n\}$ converges strongly to $P_{\Fix(S)}(u)$; 
  \item if $\liminf_n \beta_n(1-\beta_n) > 0$,
	$\alpha_n > 0$ for all $n \in \N$, then
	$\{x_n\}$ converges strongly to $P_F (u)$. 
 \end{enumerate}
\end{theorem}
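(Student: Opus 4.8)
The plan is to reduce every part to a single scalar recursion of the form $s_{n+1}\le(1-\alpha_n)s_n+\alpha_n t_n+\gamma_n$, where $s_n=\norm{x_n-q}^2$ for the appropriate target $q$ and $\sum_n\gamma_n<\infty$, and to close the argument with the now-standard lemma of Saejung--Yotkaew type: if $\sum_n\alpha_n=\infty$ and $\limsup_k t_{n_k}\le0$ for every subsequence along which $\liminf_k(s_{n_k+1}-s_{n_k})\ge0$, then $s_n\to0$. First I would record the common preliminaries. Writing $y_n=\beta_nTx_n+(1-\beta_n)Sx_n$ and fixing any $p\in F$, nonexpansiveness of $T$ and quasinonexpansiveness of $S$ give $\norm{Tx_n-p}\le\norm{x_n-p}$ and $\norm{Sx_n-p}\le\norm{x_n-p}$, hence $\norm{y_n-p}\le\norm{x_n-p}$ and, by the convexity estimate $\norm{x_{n+1}-p}\le\max\{\norm{u-p},\norm{x_n-p}\}$, the sequence $\{x_n\}$ (and thus $\{Tx_n\}$, $\{Sx_n\}$) is bounded; moreover $\alpha_n\to0$ forces $x_{n+1}-y_n\to0$. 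I would also invoke the demiclosedness principle, so that $I-T$ is demiclosed at $0$ for free, the hypothesis supplying the same for $S$.

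For part (3), with $q=P_F(u)\in F$, the identity $\norm{y_n-q}^2=\beta_n\norm{Tx_n-q}^2+(1-\beta_n)\norm{Sx_n-q}^2-\beta_n(1-\beta_n)\norm{Tx_n-Sx_n}^2$ together with $\norm{Tx_n-q},\norm{Sx_n-q}\le\norm{x_n-q}$ yields the recursion with $t_n\le2\ip{u-q}{x_{n+1}-q}$ and two nonpositive correction terms, $-(1-\beta_n)(\norm{x_n-q}^2-\norm{Sx_n-q}^2)$ and $-\beta_n(1-\beta_n)\norm{Tx_n-Sx_n}^2$. Along any subsequence with $\liminf_k(s_{n_k+1}-s_{n_k})\ge0$ both corrections must vanish; since $\liminf_n\beta_n(1-\beta_n)>0$ keeps $\beta_n$ bounded away from $0$ and $1$, this forces $\norm{x_{n_k}-q}-\norm{Sx_{n_k}-q}\to0$ and $\norm{Tx_{n_k}-Sx_{n_k}}\to0$; strong quasinonexpansiveness of $S$ then gives $x_{n_k}-Sx_{n_k}\to0$, whence $x_{n_k}-Tx_{n_k}\to0$. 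Passing to a weakly convergent sub-subsequence $x_{n_{k_j}}\rightharpoonup z$ realizing the relevant $\limsup$, demiclosedness of $I-T$ and of $I-S$ place $z\in F$; since $x_{n_{k_j}+1}-x_{n_{k_j}}\to0$ we get $x_{n_{k_j}+1}\rightharpoonup z$, and the variational inequality for $q=P_F(u)$ gives $\limsup_k\ip{u-q}{x_{n_k+1}-q}=\ip{u-q}{z-q}\le0$. The lemma then yields $x_n\to q$.

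Part (2) runs along the same lines with $q=P_{\Fix(S)}(u)$ and $y_n=Sx_n+f_n$, $f_n=\beta_n(Tx_n-Sx_n)$: here $\sum_n\beta_n<\infty$ makes $\{f_n\}$ summable, the bound $\norm{Sx_n-q}\le\norm{x_n-q}$ survives, and the defect $g_n:=\norm{x_n-q}^2-\norm{Sx_n-q}^2\ge0$ supplies the single nonpositive correction. The perturbation contributes $\gamma_n=O(\beta_n)$; dividing by $\alpha_n$ and using the hypothesis $\beta_n/\alpha_n\to0$ lets me absorb it into $t_n$ without spoiling $\limsup_k t_{n_k}\le0$. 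The defect forces $x_{n_k}-Sx_{n_k}\to0$ by strong quasinonexpansiveness, and demiclosedness of $I-S$ alone identifies the weak limit in $\Fix(S)$, giving $x_n\to P_{\Fix(S)}(u)$.

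Part (1) is the one genuine obstacle, and where the extra hypotheses $\sum_n(1-\beta_n)<\infty$ and $\sum_n\abs{\alpha_{n+1}-\alpha_n}<\infty$ are actually needed: the target $q=P_{\Fix(T)}(u)$ need not lie in $\Fix(S)$, so strong quasinonexpansiveness of $S$ is unavailable and the subsequence trick no longer manufactures asymptotic regularity, the coefficient $\beta_n(1-\beta_n)$ now tending to $0$. Instead I would write $y_n=Tx_n+e_n$ with $e_n=(1-\beta_n)(Sx_n-Tx_n)$ summable and prove \emph{full} asymptotic regularity directly: with $M=\sup_n\norm{u-y_n}$, the estimate $\norm{x_{n+1}-x_n}\le(1-\alpha_n)\norm{x_n-x_{n-1}}+\abs{\alpha_n-\alpha_{n-1}}M+\norm{e_n}+\norm{e_{n-1}}$ combined with the two summability conditions and $\sum_n\alpha_n=\infty$ gives $\norm{x_{n+1}-x_n}\to0$, hence $x_n-Tx_n\to0$ for the full sequence. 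From there the key lemma (with a trivial subsequence structure) applied to $s_{n+1}\le(1-\alpha_n)s_n+\alpha_nt_n+\gamma_n$, together with demiclosedness of $I-T$ and the variational inequality for $P_{\Fix(T)}(u)$, delivers $x_n\to q$. The main work, then, is establishing this asymptotic regularity and, in each part, verifying that the correction terms are nonpositive so that the $\limsup$ condition of the lemma can be checked on the distinguished subsequences.
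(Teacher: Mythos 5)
Your proposal is correct, but it takes a genuinely different route from the paper. The paper's strategy is modular reduction to black-box results: part (1) is proved by comparing $\{x_n\}$ with the genuine Halpern sequence $y_{n+1}=\alpha_n u+(1-\alpha_n)Ty_n$, showing $x_n-y_n\to 0$ via the Liu-type lemma (using $\sum_n(1-\beta_n)<\infty$) and then invoking the Wittmann--Bauschke theorem; part (2) is proved by rewriting the iteration as $x_{n+1}=\gamma_n u_n+(1-\gamma_n)Sx_n$ with $\gamma_n=\alpha_n+\beta_n-\alpha_n\beta_n$ and $u_n\to u$ (this is where $\beta_n/\alpha_n\to 0$ and $\alpha_n>0$ enter) and citing a viscosity-approximation theorem of the author; part (3) is proved by verifying that $U_n=\beta_n T+(1-\beta_n)S$ is a strongly quasinonexpansive-type sequence with $\hat{\Fix}(\{U_n\})=\bigcap_n\Fix(U_n)$ and citing a Halpern-type theorem for such sequences. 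You instead give a unified, essentially self-contained argument around one scalar recursion $s_{n+1}\le(1-\alpha_n)s_n+\alpha_n t_n+\gamma_n$: the Saejung--Yotkaew subsequence lemma for (2) and (3), and, for (1), direct asymptotic regularity ($\|x_{n+1}-x_n\|\to 0$ via $\sum_n\abs{\alpha_{n+1}-\alpha_n}<\infty$ and summability of $e_n=(1-\beta_n)(Sx_n-Tx_n)$) followed by a Xu/Liu-type lemma; your identification of the two nonpositive correction terms in (3) and the trigger for strong quasinonexpansiveness of $S$ is exactly the computational content of the paper's Lemma~\ref{l:U_n:sqnt-(Z)}, just woven into the convergence proof rather than packaged as verification of the strongly quasinonexpansive type and condition (Z). What each buys: the paper's reductions are shorter given the cited machinery and match its stated aim of deriving the result from known theorems; your version avoids the sequence-of-mappings formalism entirely and, notably, recovers the paper's generalizations for free --- in (2) you only use $\beta_n\to 0$ and $\beta_n/\alpha_n\to 0$ rather than $\sum_n\beta_n<\infty$ (Theorem~\ref{t:g=Falset-et_al(2)}), and in (3) nonexpansiveness of $T$ is used only to get demiclosedness of $I-T$, so your argument proves Theorem~\ref{t:general3} verbatim. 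Your use of $\alpha_n>0$ in (2) (to absorb the $O(\beta_n)$ perturbation into $t_n$ after dividing by $\alpha_n$) and in (3) (to apply the subsequence lemma) also correctly reflects the paper's remark that this hypothesis, omitted in the original source, is genuinely needed, while your part (1) correctly avoids it.
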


\begin{remark}
 The assumption that ``$\alpha_n > 0$ for all $n \in \N$'' is not
 described in~(2) and~(3) of \cite{MR3453020}*{Theorem~3}.
 However, this assumption is used in the proof of the theorem. 
\end{remark}

This paper is organized as follows: 
In the next section, some notions and definitions are introduced.
In \S3, we give a simple proof of (1) of Theorem~\ref{t:Falset-et_al}. 
In \S4, we obtain a slight generalization
(Theorem~\ref{t:g=Falset-et_al(2)}) of (2) of
Theorem~\ref{t:Falset-et_al}.
In \S5, we prove a strong convergence theorem for two quasinonexpansive
mappings (Theorem~\ref{t:general3}),
which is a generalization of (3) of Theorem~\ref{t:Falset-et_al}.

\section{Preliminaries}
Throughout the present paper, 
$H$ denotes a real Hilbert space, 
$\ip{\,\cdot\,}{\,\cdot\,}$ the inner product of $H$, 
$\norm{\,\cdot\,}$ the norm of $H$, 
$C$ a nonempty closed convex subset of $H$, 
$I$ the identity mapping on $H$, 
and $\N$ the set of positive integers. 
Strong convergence of a sequence $\{x_n\}$ in $H$ to $x\in H$ is denoted
by $x_n \to x$ and weak convergence by $x_n \rightharpoonup x$. 

It is known that
\begin{equation}\label{e:2uc2us}
 \norm{\lambda x + (1-\lambda)y}^2 
  = \lambda \norm{x}^2 + (1-\lambda) \norm{y}^2 
  - \lambda (1-\lambda) \norm{x-y}^2
\end{equation}
holds for all $x,y \in H$ and a real number $\lambda$. 

Let $T\colon C\to H$ be a mapping. 
The set of fixed points of $T$ is denoted by $\Fix(T)$. 
A mapping $T$ is said to be \emph{quasinonexpansive} if 
$\Fix(T) \ne \emptyset$ and $\norm{Tx - p} \leq \norm{x-p}$ 
for all $x\in C$ and $p\in \Fix(T)$; 
$T$ is said to be \emph{nonexpansive} if 
$\norm{Tx - Ty} \leq \norm{x-y}$ for all $x,y\in C$; 
$T$ is said to be \emph{strongly quasinonexpansive}~\cites{%
1703.02218,pNACA2015,MR3213161,%
MR3258665,MR2529497,MR2884574,MR2960628,%
MR2358984,MR1386686}
if each $T$ is quasinonexpansive and $Tx_n - x_n \to 0$ whenever
$\{x_n\}$ is a bounded sequence in $C$ and $\norm{x_n - p} - \norm{Tx_n
- p} \to 0$ for some point $p\in \Fix(T)$;
$T$ is \emph{demiclosed at $0$} if 
$Tp = 0$ whenever $\{ x_n \}$ is a sequence in $C$ such that 
$x_n\rightharpoonup p$ and $T x_n \to 0$. 
We know the following:
\begin{itemize}
 \item If $T\colon C\to H$ is quasinonexpansive, then $\Fix(T)$ is
       closed and convex; see~\cite{MR0298499}*{Theorem~1};
 \item if $T\colon C\to H$ is nonexpansive,
       then $I-T$ is demiclosed at $0$; see~\cite{MR1074005}. 
\end{itemize}

\begin{remark}
 Let $T\colon C\to H$ be a mapping with a fixed point. 
 Then $T$ is strongly quasinonexpansive if and only if 
 $T$ is of type~(sr) in the sense of \cites{%
 MR3258665,MR2529497,MR2884574,MR2960628}. 
\end{remark}

The following lemma is essentially proved in~\cite{MR0324491};
see~\cite{MR0324491}*{Lemma 3} and its proof. 
We give the proof for the sake of completeness. 

\begin{lemma} \label{l:convex_T-S}
 Let $T\colon C \to H$ and $S\colon C\to H$ be quasinonexpansive mappings
 and $U \colon C\to H$ a mapping defined by 
 $U = \beta T + (1-\beta) S$, where $\beta \in (0,1)$. 
 Suppose that $\Fix(T) \cap \Fix(S) \ne \emptyset$. 
 Then $\Fix(U) = \Fix(T) \cap \Fix(S)$ and $U$ is quasinonexpansive. 
\end{lemma}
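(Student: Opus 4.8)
The plan is to prove the asserted set equality by establishing the two inclusions separately and then to read off quasinonexpansiveness from the convexity of the norm. The inclusion $\Fix(T)\cap\Fix(S)\subseteq\Fix(U)$ is immediate: if $x$ satisfies $Tx=x$ and $Sx=x$, then $Ux=\beta x+(1-\beta)x=x$, so $x\in\Fix(U)$.

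The reverse inclusion is the crux, and here I would exploit identity~\eqref{e:2uc2us}. Fix a point $p\in\Fix(T)\cap\Fix(S)$, which exists by hypothesis, and let $x\in\Fix(U)$. Since $x-p=Ux-p=\beta(Tx-p)+(1-\beta)(Sx-p)$, applying~\eqref{e:2uc2us} with $\lambda=\beta$ gives
\[
 \norm{x-p}^2 = \beta\norm{Tx-p}^2 + (1-\beta)\norm{Sx-p}^2 - \beta(1-\beta)\norm{Tx-Sx}^2.
\]
Because $T$ and $S$ are quasinonexpansive, both $\norm{Tx-p}$ and $\norm{Sx-p}$ are at most $\norm{x-p}$, so the right-hand side is bounded above by $\norm{x-p}^2-\beta(1-\beta)\norm{Tx-Sx}^2$. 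Rearranging yields $\beta(1-\beta)\norm{Tx-Sx}^2\le 0$, and since $\beta\in(0,1)$ forces $\beta(1-\beta)>0$, I conclude $Tx=Sx$. Substituting this back into $\beta Tx+(1-\beta)Sx=x$ gives $Tx=Sx=x$, so $x\in\Fix(T)\cap\Fix(S)$.

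With the equality $\Fix(U)=\Fix(T)\cap\Fix(S)$ in hand, $\Fix(U)$ is nonempty, and quasinonexpansiveness is routine: for any $x\in C$ and $p\in\Fix(U)$, the triangle inequality together with the two quasinonexpansive estimates gives $\norm{Ux-p}\le\beta\norm{Tx-p}+(1-\beta)\norm{Sx-p}\le\norm{x-p}$.

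I expect the reverse inclusion to be the only step requiring real work. Its key feature is that the strictly negative cross term $-\beta(1-\beta)\norm{Tx-Sx}^2$ in~\eqref{e:2uc2us} cannot be accommodated once both quasinonexpansive bounds are invoked, which is exactly what pins down $Tx=Sx$ and thereby collapses the convex combination onto a common fixed point. The hypothesis $\beta\in(0,1)$, ensuring that the coefficient $\beta(1-\beta)$ is positive, is essential at this point.
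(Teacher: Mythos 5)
Your proposal is correct and follows essentially the same route as the paper's proof: both inclusions are handled identically, with the reverse inclusion obtained from identity~\eqref{e:2uc2us} and the quasinonexpansive bounds forcing $\beta(1-\beta)\norm{Tx-Sx}^2 \le 0$, hence $Tx=Sx$. You merely spell out two steps the paper leaves implicit (the substitution giving $Tx=Sx=x$ and the triangle-inequality verification that $U$ is quasinonexpansive), which is fine.
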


\begin{proof}
 Set $F=\Fix(T) \cap \Fix(S)$. 
 Clearly, $\Fix(U) \supset F$. Thus $\Fix(U)$ is nonempty. 
 We show $\Fix(U) \subset F$.
 Let $z \in \Fix(U)$ and $w \in F$ be given. 
 Since $T$ and $S$ are quasinonexpansive and $w \in \Fix(T) \cap \Fix
 (S)$, it follows from~\eqref{e:2uc2us} that 
 \begin{align*}
  \norm{z-w}^2 &= \norm{Uz -w}^2
  = \norm{\beta (Tz - w) + (1-\beta) (Sz-w)}^2 \\
  &= \beta \norm{Tz - w}^2 + (1-\beta)\norm{Sz-w}^2
  - \beta(1-\beta)\norm{Tz-Sz}^2 \\
  &\leq \norm{z - w}^2  - \beta(1-\beta)\norm{Tz-Sz}^2. 
 \end{align*}
 Thus $\norm{Tz - Sz} =0$ and hence $Tz=Sz$.
 This implies that $z \in F$. Therefore,
 $\Fix(U) \subset F$, and thus $\Fix(U)  = F$. 
 It is easy to verify that $U$ is quasinonexpansive. 
\end{proof}

It is known that, for each $x \in H$, 
there exists a unique point $x_0 \in C$ such that 
\[
 \norm{x - x_0} = \min\{\norm{x-y}: y\in C\}. 
\]
Such a point $x_0$ is denoted by $P_C (x)$ and $P_C$ is called the
\emph{metric projection} of $H$ onto $C$; see~\cite{MR2548424}
for more details. 

Let $\{T_n\}$ be a sequence of mappings of $C$ into $H$ 
such that $F=\bigcap_{n=1}^\infty \Fix(T_n)$ is nonempty. Then 
$\{T_n\}$ is said to be \emph{strongly quasinonexpansive type}~\cites{%
MR3258665,MR2960628,MR2529497,MR3013135,%
NMJ2016,MR3213161} if each $T_n$ is quasinonexpansive and 
$T_n x_n - x_n \to 0$ whenever $\{x_n\}$ is a bounded sequence
in $C$ and $\norm{x_n - p} - \norm{T_n x_n - p} \to 0$ 
for some point $p\in F$; 
$z \in C$ is said to be an \emph{asymptotic fixed point} of
$\{T_n\}$ if
there exist a sequence $\{ x_n \}$ in $C$ and a subsequence
$\{x_{n_i}\}$ of $\{x_n\}$ such that
$T_n x_n - x_n \to 0$ and $x_{n_i} \rightharpoonup z$;
see~\cite{MR3013135}. 
The set of asymptotic fixed points of $\{T_n\}$ is denoted by
$\hat{\Fix}(\{T_n\})$. It is clear that $F \subset \hat{\Fix}(\{T_n\})$.
The following lemma is also clear from the definition: 

\begin{lemma}\label{l:T_n=T}
 Let $T\colon C\to H$ be a mapping with a fixed point.
 Suppose that $T_n = T$ for $n\in \N$. Then the following hold:
 \begin{itemize}
  \item If $I-T$ is demiclosed at $0$,
	then $\hat{\Fix}(\{T_n\}) = \bigcap_{n=1}^\infty \Fix(T_n)$;
  \item if $T$ is strongly quasinonexpansive, 
	then $\{T_n\}$ is strongly quasinonexpansive type. 
 \end{itemize}
\end{lemma}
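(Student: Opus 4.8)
The plan is to reduce both assertions to unwinding the relevant definitions, the key observation being that the hypothesis $T_n = T$ forces $F = \bigcap_{n=1}^\infty \Fix(T_n) = \Fix(T)$, so that every condition phrased in terms of the sequence $\{T_n\}$ and its common fixed point set $F$ collapses to the corresponding condition for the single mapping $T$ and its fixed point set $\Fix(T)$.

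For the first assertion I would use that the inclusion $F \subset \hat{\Fix}(\{T_n\})$ has already been recorded, so it remains only to prove $\hat{\Fix}(\{T_n\}) \subset F$. Let $z \in \hat{\Fix}(\{T_n\})$ be given; by definition there are a sequence $\{x_n\}$ in $C$ and a subsequence $\{x_{n_i}\}$ with $T_n x_n - x_n \to 0$ and $x_{n_i} \rightharpoonup z$. Since $T_n = T$, the first convergence reads $x_n - Tx_n \to 0$, that is, $(I-T)x_n \to 0$, and hence $(I-T)x_{n_i} \to 0$. Applying the demiclosedness of $I-T$ at $0$ to the sequence $\{x_{n_i}\}$, which satisfies $x_{n_i} \rightharpoonup z$ and $(I-T)x_{n_i} \to 0$, yields $(I-T)z = 0$, i.e.\ $z \in \Fix(T) = F$. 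This gives the reverse inclusion and thus the desired equality.

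For the second assertion I would first note that, since $T$ is strongly quasinonexpansive, it is in particular quasinonexpansive, and therefore so is each $T_n = T$. To verify the asymptotic regularity requirement in the definition of strongly quasinonexpansive type, let $\{x_n\}$ be a bounded sequence in $C$ with $\norm{x_n - p} - \norm{T_n x_n - p} \to 0$ for some $p \in F$. Because $T_n = T$ and $F = \Fix(T)$, this is precisely the hypothesis of the definition of strong quasinonexpansiveness for $T$ at the fixed point $p \in \Fix(T)$, so $Tx_n - x_n \to 0$, i.e.\ $T_n x_n - x_n \to 0$, as required.

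I do not expect any genuine obstacle here: both statements are, as the text says, clear from the definitions once $F = \Fix(T)$ is identified. The only place demanding a moment of care is in the first assertion, where one must make sure that the convergence $(I-T)x_n \to 0$, available along the full sequence, is passed to the subsequence $\{x_{n_i}\}$, so that demiclosedness is invoked on the same index set along which the weak convergence $x_{n_i} \rightharpoonup z$ holds.
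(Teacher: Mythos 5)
Your proof is correct and follows exactly the route the paper intends: the paper offers no written proof, declaring the lemma ``clear from the definition,'' and your argument is precisely that definitional unwinding, identifying $F=\Fix(T)$ and passing $(I-T)x_n\to 0$ to the weakly convergent subsequence before invoking demiclosedness. Nothing is missing.
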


\begin{remark}\label{r:sqnt=srns:F^=(Z)}
 Let $\{T_n\}$ be a sequence of mappings of $C$ into $H$
 such that $F=\bigcap_{n=1}^\infty \Fix(T_n)$ is nonempty. 
 Then we know the following: 
 \begin{itemize}
  \item  $\{T_n\}$ is strongly quasinonexpansive type
	 if and only if it is a strongly relatively nonexpansive sequence 
	 in the sense of \cites{%
	 MR3258665,MR2960628,MR2529497,MR3013135}; 
	 see~\cite{MR3213161}*{Remark~2.5};
  \item $F = \hat{\Fix}(\{T_n\})$
	if and only if $\{T_n\}$ satisfies the condition~(Z)
	in the sense of~\cites{MR3258665,MR3185784,MR3213161,MR2960628,%
	MR2799767,MR2671943,MR2529497,
	MR3203624,pNACA2011,pNLMUA2011,MR2762191,MR2762173,MR3013135};
	see~\cite{MR3013135}*{Proposition~6};
  \item if $z \in \hat{\Fix}(\{T_n\})$, then 
	there exist a \emph{bounded} sequence $\{ x_n \}$ in $C$ and 
	a subsequence $\{x_{n_i}\}$ of $\{x_n\}$ such that
	$T_n x_n - x_n \to 0$ and $x_{n_i} \rightharpoonup z$;
	see the proof of~\cite{MR3013135}*{Proposition~6}. 
 \end{itemize}
\end{remark}

The following lemma is well known; see, for example,
\cite{MR1353071}*{Lemma 2}. 

\begin{lemma}\label{lm:seq}
 Let $\{ \xi_n \}$ be a sequence of nonnegative real numbers, 
 $\{ \alpha_n \}$ a sequence in $[0,1]$, 
 and $\{ \gamma_n \}$ a sequence of nonnegative real numbers. 
 Suppose that 
 $\xi_{n+1} \leq (1- \alpha_n) \xi_n + \gamma_n$
 for all $n\in\N$, 
 $\sum_{n=1}^\infty \alpha_n = \infty$,
 and $\sum_{n=1}^\infty \gamma_n < \infty$.
 Then $\xi_n \to 0$. 
\end{lemma}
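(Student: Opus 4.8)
The plan is to iterate the recurrence and exploit two facts: a tail of the convergent series $\sum \gamma_n$ is small, while a product of the factors $1-\alpha_k$ decays to zero. First I would fix $\varepsilon > 0$ and use $\sum_{n=1}^\infty \gamma_n < \infty$ to choose an index $m \in \N$ with $\sum_{k=m}^\infty \gamma_k < \varepsilon$.

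Next I would unroll the hypothesis $\xi_{n+1} \le (1-\alpha_n)\xi_n + \gamma_n$ starting from $m$. A straightforward induction gives, for every $n \ge m$,
\begin{equation*}
 \xi_{n+1} \le \left(\prod_{k=m}^n (1-\alpha_k)\right)\xi_m
 + \sum_{j=m}^n \left(\prod_{k=j+1}^n (1-\alpha_k)\right)\gamma_j.
\end{equation*}
Since each factor $1-\alpha_k$ lies in $[0,1]$, every inner product is at most $1$, so the second term is bounded by $\sum_{j=m}^n \gamma_j \le \sum_{j=m}^\infty \gamma_j < \varepsilon$. Hence $\xi_{n+1} \le \left(\prod_{k=m}^n (1-\alpha_k)\right)\xi_m + \varepsilon$ for all $n \ge m$.

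The crux is to control the remaining product. Using the elementary inequality $1 - t \le e^{-t}$, I obtain $\prod_{k=m}^n (1-\alpha_k) \le \exp\left(-\sum_{k=m}^n \alpha_k\right)$. Because $\sum_{n=1}^\infty \alpha_n = \infty$, the partial sums $\sum_{k=m}^n \alpha_k \to \infty$ as $n \to \infty$, so this product tends to $0$. As $m$, and therefore $\xi_m$, is now fixed, letting $n \to \infty$ yields $\limsup_{n\to\infty} \xi_n \le \varepsilon$. Since $\varepsilon > 0$ was arbitrary and each $\xi_n \ge 0$, I conclude $\xi_n \to 0$.

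I expect the only delicate step to be the decay of $\prod_{k=m}^n (1-\alpha_k)$; the rest is bookkeeping. The estimate $1-t \le e^{-t}$ is the cleanest route, and it also handles gracefully the boundary case $\alpha_k = 1$, where the product simply becomes $0$, so no separate argument is required.
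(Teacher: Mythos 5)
Your proof is correct and complete: the unrolled estimate, the bound on the weighted tail sum by $\sum_{j\ge m}\gamma_j<\varepsilon$, and the decay of $\prod_{k=m}^n(1-\alpha_k)$ via $1-t\le e^{-t}$ together with $\sum_n\alpha_n=\infty$ are exactly what is needed, and your handling of the boundary case $\alpha_k=1$ is a nice touch. The paper itself gives no proof of this lemma --- it is quoted as well known with a reference to Liu's Lemma~2 --- and your argument is precisely the standard proof of that cited result, so there is nothing to reconcile.
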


\section{Proof of Theorem~\ref{t:Falset-et_al}~(1)}

In this section, we give a simple proof of the conclusion~(1) of
Theorem~\ref{t:Falset-et_al}. The proof depends heavily on the
following well known result; see~\cite{MR1156581}*{Theorem 2} and
\cite{MR1402593}*{Theorem 3.2}.

\begin{theorem}\label{t:wittmann}
 Let $H$ be a Hilbert space, $C$ a nonempty closed convex subset of $H$, 
 $T\colon C\to C$ a nonexpansive mapping, $u$ a point in $C$, 
 $\{\alpha_n\}$ a sequence in $[0,1]$,
 and $\{y_n\}$ a sequence in $C$ defined by $y_1 \in C$ and
 \begin{equation}\label{e:halpern}
  y_{n+1} = \alpha_n u + (1-\alpha_n) Ty_n  
 \end{equation}
 for $n \in \N$. 
 Suppose that $\Fix(T)$ is nonempty, 
 $\alpha_n \to 0$, $\sum_{n=1}^\infty \alpha_n = \infty$, and
 $\sum_{n=1}^\infty \abs{\alpha_{n+1} - \alpha_n} < \infty$.
 Then $\{y_n\}$ converges strongly to $P_{\Fix(T)}(u)$. 
\end{theorem}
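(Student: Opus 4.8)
The plan is to run the classical Halpern-iteration argument, reducing strong convergence to a scalar recursion of the form handled by Lemma~\ref{lm:seq}. Write $z = P_{\Fix(T)}(u)$ and fix $p \in \Fix(T)$. First I would show that $\{y_n\}$ is bounded: a routine induction on \eqref{e:halpern}, using $\norm{Ty_n - p} \le \norm{y_n - p}$, gives $\norm{y_{n+1} - p} \le \max\{\norm{u - p}, \norm{y_1 - p}\}$, so $\{y_n\}$, $\{Ty_n\}$, and $\{u - Ty_n\}$ are all bounded, say by a constant $M$. Next I would establish asymptotic regularity $\norm{y_{n+1} - y_n} \to 0$. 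Subtracting the recursions for consecutive indices and rearranging yields
\[
 y_{n+1} - y_n = (1 - \alpha_n)(Ty_n - Ty_{n-1}) + (\alpha_n - \alpha_{n-1})(u - Ty_{n-1}),
\]
so that, by nonexpansiveness of $T$,
\[
 \norm{y_{n+1} - y_n} \le (1 - \alpha_n)\norm{y_n - y_{n-1}} + M\abs{\alpha_n - \alpha_{n-1}}.
\]
This is exactly the hypothesis of Lemma~\ref{lm:seq} with $\xi_n = \norm{y_n - y_{n-1}}$ and $\gamma_n = M\abs{\alpha_n - \alpha_{n-1}}$, whose summability is assumed; hence $\norm{y_{n+1} - y_n} \to 0$.

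Second, I would deduce $\norm{y_n - Ty_n} \to 0$: since $y_{n+1} - Ty_n = \alpha_n(u - Ty_n)$ with $\alpha_n \to 0$ and $\{u - Ty_n\}$ bounded, we have $\norm{y_{n+1} - Ty_n} \to 0$, and combining with asymptotic regularity gives $\norm{y_n - Ty_n} \le \norm{y_n - y_{n+1}} + \norm{y_{n+1} - Ty_n} \to 0$. The crux of the proof is then the estimate
\[
 \limsup_{n\to\infty} \ip{u - z}{y_n - z} \le 0.
\]
To prove it I would pass to a subsequence $\{y_{n_i}\}$ along which $\ip{u - z}{y_n - z}$ converges to the limit superior and which, by boundedness and reflexivity, converges weakly to some $v \in C$, so that $\ip{u - z}{y_{n_i} - z} \to \ip{u - z}{v - z}$. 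Because $\norm{y_{n_i} - Ty_{n_i}} \to 0$ and $I - T$ is demiclosed at $0$ for the nonexpansive map $T$ (as recorded in the preliminaries), the weak limit $v$ lies in $\Fix(T)$; the variational characterization of the metric projection, $\ip{u - z}{w - z} \le 0$ for every $w \in \Fix(T)$, then forces $\ip{u - z}{v - z} \le 0$, which is the asserted bound.

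Finally, I would combine these facts through a standard expansion of the norm. Writing $y_{n+1} - z = (1-\alpha_n)(Ty_n - z) + \alpha_n(u - z)$ and using $\norm{a+b}^2 \le \norm{a}^2 + 2\ip{b}{a+b}$ together with $(1-\alpha_n)^2 \le 1 - \alpha_n$ and the nonexpansiveness of $T$ gives
\[
 \norm{y_{n+1} - z}^2 \le (1 - \alpha_n)\norm{y_n - z}^2 + 2\alpha_n \ip{u - z}{y_{n+1} - z}.
\]
Since $\limsup_n \ip{u - z}{y_{n+1} - z} \le 0$, an $\varepsilon$-argument finishes the proof: for any $\varepsilon > 0$ the quantity $2\ip{u - z}{y_{n+1} - z}$ is below $\varepsilon$ for large $n$, so the recursion yields $\norm{y_{n+1} - z}^2 - \varepsilon \le (1-\alpha_n)(\norm{y_n - z}^2 - \varepsilon)$, and $\prod_n(1 - \alpha_n) = 0$ (a consequence of $\sum_n \alpha_n = \infty$) forces $\limsup_n \norm{y_n - z}^2 \le \varepsilon$; letting $\varepsilon \to 0$ gives $y_n \to z = P_{\Fix(T)}(u)$. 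I expect the main obstacle to be the $\limsup$ inequality: it is the only genuinely infinite-dimensional step, resting on the interplay of asymptotic regularity, demiclosedness of $I - T$, and the projection inequality, while everything else is bookkeeping that feeds the scalar recursions.
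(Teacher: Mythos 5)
Your proposal is correct, but there is nothing in the paper to compare it against: the paper does not prove Theorem~\ref{t:wittmann} at all. It is quoted as a known result, with the proof delegated to Wittmann \cite{MR1156581}*{Theorem 2} and Bauschke \cite{MR1402593}*{Theorem 3.2}, and the paper's own work starts only afterwards (Lemma~\ref{l:y_n2w=>x_n2w} and the deduction of Theorem~\ref{t:Falset-et_al}~(1)). What you have written is the now-standard Halpern-iteration argument in the style of Xu rather than Wittmann's original proof (which establishes convergence by a different, more delicate telescoping estimate and does not pass through demiclosedness and weak subsequential limits). Your version has the virtue of being self-contained relative to this paper's preliminaries: boundedness by induction; asymptotic regularity via the identity $y_{n+1}-y_n=(1-\alpha_n)(Ty_n-Ty_{n-1})+(\alpha_n-\alpha_{n-1})(u-Ty_{n-1})$ fed into Lemma~\ref{lm:seq} (this is exactly where $\sum_n\abs{\alpha_{n+1}-\alpha_n}<\infty$ enters); $y_n-Ty_n\to 0$; the $\limsup$ inequality via demiclosedness of $I-T$ (recorded in \S2 via \cite{MR1074005}), weak closedness of $C$, closed convexity of $\Fix(T)$, and the variational characterization of $P_{\Fix(T)}$; and the final recursion $\norm{y_{n+1}-z}^2\le(1-\alpha_n)\norm{y_n-z}^2+2\alpha_n\ip{u-z}{y_{n+1}-z}$.

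One point worth praising explicitly: in the last step you did \emph{not} try to apply Lemma~\ref{lm:seq} as stated, which would be illegitimate here since $\gamma_n=2\alpha_n\ip{u-z}{y_{n+1}-z}^{+}$ need not be summable (only $\limsup_n\ip{u-z}{y_{n+1}-z}\le 0$ is available); instead you ran the $\varepsilon$-argument with $\xi_{n+1}-\varepsilon\le(1-\alpha_n)(\xi_n-\varepsilon)$ and $\prod_k(1-\alpha_k)=0$, which is exactly the correct substitute (equivalently, one could invoke the standard strengthening of Lemma~\ref{lm:seq} with $\gamma_n=\alpha_n\delta_n$, $\limsup_n\delta_n\le 0$, but that version is not in this paper). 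All steps check out, including the algebraic identity for $y_{n+1}-y_n$ and the use of $\norm{a+b}^2\le\norm{a}^2+2\ip{b}{a+b}$ with $(1-\alpha_n)^2\le 1-\alpha_n$.
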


The following lemma connects Theorem~\ref{t:wittmann} and 
Theorem~\ref{t:Falset-et_al}~(1). 

\begin{lemma}\label{l:y_n2w=>x_n2w}
 Let $H$, $C$, $T$, and $u$ be the same as in Theorem~\ref{t:wittmann}, 
 $\{\alpha_n\}$ and $\{\beta_n\}$ sequences in $[0,1]$,
 $\{z_n\}$ a bounded sequence in $C$, 
 $\{x_n\}$ a sequence in $C$ defined by $x_1 \in C$ and
 \[
 x_{n+1} = \alpha_n u + (1-\alpha_n) \left[
 \beta_n Tx_n + (1-\beta_n) z_n \right] 
 \]
 for $n \in \N$, and $\{y_n\}$ a sequence in $C$ defined by $y_1 \in C$ 
 and~\eqref{e:halpern} for $n \in \N$. 
 Suppose that $\sum_{n=1}^\infty \alpha_n = \infty$ and 
 $\sum_{n=1}^\infty (1-\beta_n) < 0$. Then $x_n - y_n \to 0$. 
\end{lemma}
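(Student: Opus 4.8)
The plan is to reduce the claim to the scalar recursion lemma, Lemma~\ref{lm:seq}, by establishing a one-step estimate for $\xi_n := \norm{x_n - y_n}$ of the form $\xi_{n+1} \le (1-\alpha_n)\xi_n + \gamma_n$ with $\{\gamma_n\}$ summable. The only ingredient beyond the two defining recursions is the boundedness of $\{z_n - Ty_n\}$, which I would secure first.

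Since $\Fix(T)$ is nonempty, I fix $p \in \Fix(T)$. From $y_{n+1} = \alpha_n u + (1-\alpha_n)Ty_n$ and the nonexpansiveness of $T$ one obtains $\norm{y_{n+1} - p} \le \alpha_n \norm{u-p} + (1-\alpha_n)\norm{y_n - p}$, so by induction $\norm{y_n - p} \le \max\{\norm{u-p}, \norm{y_1 - p}\}$ for all $n \in \N$. Hence $\{y_n\}$ is bounded, and so is $\{Ty_n\}$; since $\{z_n\}$ is bounded by hypothesis, $M := \sup_n \norm{z_n - Ty_n} < \infty$.

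For the one-step estimate, I would subtract the two recursions to get $x_{n+1} - y_{n+1} = (1-\alpha_n)\bigl[\beta_n(Tx_n - Ty_n) + (1-\beta_n)(z_n - Ty_n)\bigr]$. Taking norms, applying the triangle inequality, using the nonexpansiveness of $T$ (so that $\norm{Tx_n - Ty_n} \le \xi_n$), and invoking $1-\alpha_n \le 1$ together with $\beta_n \le 1$, I arrive at $\xi_{n+1} \le (1-\alpha_n)\beta_n \xi_n + (1-\beta_n)M \le (1-\alpha_n)\xi_n + (1-\beta_n)M$, where the last inequality uses $1-\beta_n \ge 0$. Setting $\gamma_n := (1-\beta_n)M$, the hypothesis $\sum_{n=1}^\infty (1-\beta_n) < \infty$ yields $\sum_{n=1}^\infty \gamma_n < \infty$.

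Finally, with $\sum_{n=1}^\infty \alpha_n = \infty$, $\sum_{n=1}^\infty \gamma_n < \infty$, and $\{\alpha_n\} \subset [0,1]$, Lemma~\ref{lm:seq} applies and gives $\xi_n \to 0$, that is, $x_n - y_n \to 0$. I do not anticipate a genuine obstacle; the only point demanding care is arranging the estimate so that the coefficient of $\xi_n$ is exactly $1-\alpha_n$ (absorbing the factor $\beta_n$ into it via $\beta_n \le 1$) while keeping the remainder summable, and observing that boundedness of $\{x_n\}$ itself is never needed.
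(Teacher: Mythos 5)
Your proof is correct and takes essentially the same route as the paper's: fix a point of $\Fix(T)$, bound $\{y_n\}$ (hence $\{Ty_n\}$) by induction, set $M=\sup_n\norm{z_n-Ty_n}$, derive $\norm{x_{n+1}-y_{n+1}}\leq(1-\alpha_n)\norm{x_n-y_n}+(1-\beta_n)M$ using nonexpansiveness of $T$ and $\beta_n\leq 1$, and conclude with Lemma~\ref{lm:seq}. You also correctly read the hypothesis $\sum_{n=1}^\infty(1-\beta_n)<0$ as the evident typo for $\sum_{n=1}^\infty(1-\beta_n)<\infty$, which is what the paper's proof uses as well.
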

\begin{proof}
 Let $v \in \Fix(T)$ be fixed.  
 Since $T$ is nonexpansive, we deduce that
 \begin{align*}
  \norm{y_{n+1} -v}
  &\leq \alpha_n \norm{u-v} + (1-\alpha_n)\norm{Ty_n -v} \\
  &\leq \alpha_n \norm{u-v} + (1-\alpha_n)\norm{y_n - v}. 
 \end{align*}
 Thus, by induction on $n$, we see that 
 \[
 \norm{Ty_{n+1} -v} \leq \norm{y_{n+1} -v}   
 \leq \max\{ \norm{u-v}, \norm{y_1 -v}\}  
 \]
 for all $n \in \N$. Hence $\{ Ty_n\}$ is bounded.
 Since $\{Ty_n\}$ and $\{z_n\}$ are bounded, it follows that
 $\sup_n \norm{z_n - Ty_n} < \infty$. 
 Thus we have
 \begin{align*}
  \norm{x_{n+1} - y_{n+1}}
  &\leq (1-\alpha_n) \bigl[ \beta_n \norm{Tx_n - Ty_n}
  + (1- \beta_n) \norm{z_n - Ty_n}\bigr] \\
  &\leq (1-\alpha_n)\norm{x_n - y_n}
  + (1- \beta_n) \sup_n \norm{z_n - Ty_n}
 \end{align*}
 for all $n \in \N$. 
 Therefore, $\norm{x_n - y_n} \to 0$ by Lemma~\ref{lm:seq}. 
\end{proof}

Using Theorem~\ref{t:wittmann} and Lemma~\ref{l:y_n2w=>x_n2w},
we can prove Theorem~\ref{t:Falset-et_al}~(1) as follows: 

\begin{proof}[Proof of Theorem~\ref{t:Falset-et_al}~(1)]
 Let $v \in F$ be fixed.  
 Since $T$ is nonexpansive, $S$ is quasinonexpansive,
 and $v \in \Fix(T)\cap \Fix(S)$, we have
 \begin{align}
  \begin{split}\label{e:bounded}
   \norm{x_{n+1} -v}
   &\leq \alpha_n \norm{u-v} + (1-\alpha_n)
   [\beta_n \norm{Tx_n -v} + (1-\beta_n) \norm{Sx_n-v}] \\
   &\leq \alpha_n \norm{u-v} + (1-\alpha_n) \norm{x_n -v}.
  \end{split}
 \end{align}
 Thus, by induction on $n$, we see that 
 \begin{equation}\label{e:bounded2}
 \norm{Sx_{n+1} -v} \leq \norm{x_{n+1} -v}   
 \leq \max\{ \norm{u-v}, \norm{x_1 -v}\}
 \end{equation}
 for all $n \in \N$. Hence $\{ Sx_n\}$ is bounded.
 Let $\{y_n\}$ be a sequence defined by $y_1 \in C$
 and~\eqref{e:halpern} for $n \in \N$. 
 Then Theorem~\ref{t:wittmann} implies that $y_n \to P_{\Fix(T)}(u)$ and
 Lemma~\ref{l:y_n2w=>x_n2w} implies that $x_n - y_n\to 0$.  
 Therefore we conclude that $x_n \to P_{\Fix(T)}(u)$. 
\end{proof}

\section{A generalization of Theorem~\ref{t:Falset-et_al}~(2)}

In this section, we prove the following theorem, which is a slight
generalization of the conclusion~(2) of Theorem~\ref{t:Falset-et_al}. 
In fact, the condition $\sum_{n=1}^\infty \beta_n < \infty$
can be replaced by $\beta_n \to 0$.

\begin{theorem}\label{t:g=Falset-et_al(2)}
 Let $H$ be a Hilbert space, $C$ a nonempty closed convex subset of $H$, 
 $T\colon C\to C$ a nonexpansive mapping, $S\colon C \to C$ a strongly
 quasinonexpansive mapping, $u$ a point in $C$, 
 $\{\alpha_n\}$ a sequence in $(0,1]$,
 $\{\beta_n\}$ a sequence in $[0,1]$,
 and $\{x_n\}$ a sequence in $C$ defined by $x_1 \in C$ and
 \eqref{e:Falset_etal-org} for $n \in \N$.
 Suppose that $F = \Fix(T) \cap \Fix(S)$ is nonempty, 
 $I-S$ is demiclosed at $0$, $\alpha_n \to 0$,
 $\sum_{n=1}^\infty \alpha_n = \infty$, 
 $\beta_n \to 0$, and $\beta_n/\alpha_n \to 0$. 
 Then $\{x_n\}$ converges strongly to $P_{\Fix(S)}(u)$. 
\end{theorem}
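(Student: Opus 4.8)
Write $q=P_{\Fix(S)}(u)$, which lies in $\Fix(S)$ since $F\subset\Fix(S)\ne\emptyset$, and set $s_n=\norm{x_n-q}^2$; the goal is $s_n\to 0$. First I would record boundedness: fixing $v\in F$ and arguing exactly as in \eqref{e:bounded} (using that $T$ is nonexpansive, $S$ is quasinonexpansive, and $v\in\Fix(T)\cap\Fix(S)$) gives $\norm{x_n-v}\le\max\{\norm{u-v},\norm{x_1-v}\}$, so $\{x_n\}$, and hence $\{Tx_n\}$ and $\{Sx_n\}$, are bounded. In particular $M:=\sup_n\norm{Tx_n-q}<\infty$.

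Next I would derive the controlling recursion. Writing $w_n=\beta_n Tx_n+(1-\beta_n)Sx_n$, so that $x_{n+1}-q=(1-\alpha_n)(w_n-q)+\alpha_n(u-q)$, the elementary inequality $\norm{a+b}^2\le\norm{a}^2+2\ip{a+b}{b}$ yields
\[
 s_{n+1}\le(1-\alpha_n)^2\norm{w_n-q}^2+2\alpha_n\ip{u-q}{x_{n+1}-q}.
\]
Expanding $\norm{w_n-q}^2$ by \eqref{e:2uc2us}, discarding the nonpositive term, and using $\norm{Sx_n-q}^2=s_n-a_n$ with $a_n:=\norm{x_n-q}^2-\norm{Sx_n-q}^2\ge 0$, I get $\norm{w_n-q}^2\le(1-\beta_n)(s_n-a_n)+\beta_n M^2$. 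Combining this with $(1-\alpha_n)^2\le 1-\alpha_n$ and, crucially, writing the perturbation as $\beta_n M^2=\alpha_n(\beta_n/\alpha_n)M^2$ so that it is absorbed into the Halpern term, I obtain
\[
 s_{n+1}\le(1-\alpha_n)s_n+\alpha_n\delta_n,\qquad
 \delta_n:=2\ip{u-q}{x_{n+1}-q}+\frac{\beta_n}{\alpha_n}M^2 .
\]
Since $\beta_n/\alpha_n\to 0$, one has $\limsup_n\delta_n=2\limsup_n\ip{u-q}{x_n-q}$. (Here the hypothesis $\beta_n/\alpha_n\to0$ is exactly what lets the $T$-term be treated as noise rather than folded into a combined operator, whose fixed-point set would be $F$ rather than $\Fix(S)$ by Lemma~\ref{l:convex_T-S}.)

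The heart of the matter is $\limsup_n\ip{u-q}{x_n-q}\le 0$. Because we are only given $\alpha_n\to 0$ and $\sum\alpha_n=\infty$, and \emph{not} $\sum\abs{\alpha_{n+1}-\alpha_n}<\infty$, Lemma~\ref{lm:seq} does not apply by itself and I cannot simply telescope; instead I would invoke (or reprove) a standard subsequence refinement of that lemma, of the Maingé / Saejung--Yotkaew type: it suffices to verify $\limsup_j\delta_{n_j}\le0$ along every subsequence with $\liminf_j(s_{n_j+1}-s_{n_j})\ge0$. For such a subsequence, rearranging the sharper inequality above to isolate $a_n$ gives $(1-\alpha_{n_j})^2(1-\beta_{n_j})a_{n_j}\le(s_{n_j}-s_{n_j+1})+\beta_{n_j}M^2+2\alpha_{n_j}\ip{u-q}{x_{n_j+1}-q}$, whose right-hand side has nonpositive $\limsup$; as the coefficient tends to $1$ and $a_{n_j}\ge0$, this forces $a_{n_j}\to0$. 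Since $\norm{Sx_{n_j}-q}\le\norm{x_{n_j}-q}$, the estimate $(\norm{x_{n_j}-q}-\norm{Sx_{n_j}-q})^2\le a_{n_j}$ then gives $\norm{x_{n_j}-q}-\norm{Sx_{n_j}-q}\to0$, so strong quasinonexpansiveness of $S$ yields $Sx_{n_j}-x_{n_j}\to0$. Passing to a further weakly convergent subsequence $x_{n_{j_\ell}}\rightharpoonup z$, demiclosedness of $I-S$ at $0$ gives $z\in\Fix(S)$, and the projection inequality $\ip{u-q}{z-q}\le0$ (together with $x_{n_j+1}-x_{n_j}\to0$) yields $\limsup_j\delta_{n_j}\le0$. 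The subsequence lemma then gives $s_n\to0$, i.e. $x_n\to P_{\Fix(S)}(u)$.

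I expect the main obstacle to be precisely this last step: reconciling the absence of the variation condition $\sum\abs{\alpha_{n+1}-\alpha_n}<\infty$ with the need to extract asymptotic regularity $Sx_{n_j}-x_{n_j}\to0$. The strong quasinonexpansiveness of $S$ is what converts the weak information $a_{n_j}\to0$ into norm-level regularity, and demiclosedness is what promotes weak subsequential limits to genuine fixed points of $S$; the subsequence formulation is essential, since a direct $\limsup\delta_n\le0$ bound for the full sequence is not available here.
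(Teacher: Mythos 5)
Your argument is correct, but it takes a genuinely different route from the paper's. The paper disposes of this theorem by a reparametrization: with $\gamma_n=\alpha_n+\beta_n-\alpha_n\beta_n\in(0,1]$, the iteration \eqref{e:Falset_etal-org} is rewritten as a Halpern-type iteration for $S$ alone, $x_{n+1}=\gamma_n u_n+(1-\gamma_n)Sx_n$ with variable anchors $u_n=[\alpha_n u+(1-\alpha_n)\beta_n Tx_n]/\gamma_n$ (Lemma~\ref{l:u_n}, applied with $z_n=Tx_n$ once $\{Tx_n\}$ is shown bounded exactly as in your opening paragraph via \eqref{e:bounded} and \eqref{e:bounded2}); since $\beta_n/\alpha_n\to0$ and $\{Tx_n\}$ is bounded, $u_n\to u$, and the conclusion is read off from the black-box Theorem~\ref{t:aoyama-nmj2016} (a consequence of \cite{NMJ2016}*{Corollary 3.1} and Lemma~\ref{l:T_n=T}). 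In short, the paper absorbs the $T$-term into the \emph{anchor}, whereas you absorb it into the \emph{error}, writing $\beta_nM^2=\alpha_n(\beta_n/\alpha_n)M^2$ and running a direct Maing\'e/Saejung--Yotkaew subsequence argument; the decisive hypothesis $\beta_n/\alpha_n\to0$ enters at the same structural point in both proofs. Your individual steps all check out: the estimate $(\norm{x_n-q}-\norm{Sx_n-q})^2\le a_n$, the extraction of $a_{n_j}\to0$ along subsequences with $\liminf_j(s_{n_j+1}-s_{n_j})\ge0$ (the coefficient $(1-\alpha_{n_j})^2(1-\beta_{n_j})\to1$ because $\beta_n\to0$ is assumed), the passage from $a_{n_j}\to0$ to $Sx_{n_j}-x_{n_j}\to0$ via strong quasinonexpansiveness, the use of demiclosedness of $I-S$ and the projection inequality (legitimate since $\Fix(S)$ is closed and convex by \cite{MR0298499}*{Theorem~1}), and the easy verification that $x_{n_j+1}-x_{n_j}\to0$. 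What each approach buys: the paper's proof is three lines but rests entirely on an external strong-convergence theorem, which internally performs work analogous to yours; your proof is self-contained \emph{except} for the subsequence lemma, which does not appear in the paper---Lemma~\ref{lm:seq} is only its non-subsequential ancestor and, as you correctly note, cannot be applied directly here because $\sum\abs{\alpha_{n+1}-\alpha_n}<\infty$ is unavailable---so for completeness you must prove that lemma or cite it (Maing\'e; Saejung and Yotkaew). With that reference supplied your proof is complete, and it has the expository merit of exhibiting exactly where strong quasinonexpansiveness and demiclosedness act, and of showing that nonexpansiveness of $T$ is used only to guarantee boundedness of $\{Tx_n\}$, mirroring the role of the bounded sequence $\{z_n\}$ in the paper's Lemma~\ref{l:u_n}.
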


The proof of Theorem~\ref{t:g=Falset-et_al(2)} essentially depends on
the following theorem, which is a direct consequence
of~\cite{NMJ2016}*{Corollary 3.1} and Lemma~\ref{l:T_n=T}. 

\begin{theorem}\label{t:aoyama-nmj2016}
 Let $H$, $C$, $S$, and $\{\alpha_n\}$ be the same as in
 Theorem~\ref{t:g=Falset-et_al(2)}, $\{u_n\}$ a sequence in $C$, 
 and $\{x_n\}$ a sequence in $C$ defined by $x_1 \in C$ and
 \[
 x_{n+1} = \alpha_n u_n + (1-\alpha_n) Sx_n 
 \]
 for $n \in \N$. Suppose that $u_n \to u$. 
 Then $\{x_n\}$ converges strongly to $P_{\Fix(S)}(u)$. 
\end{theorem}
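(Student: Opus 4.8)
The plan is to reduce the statement to \cite{NMJ2016}*{Corollary~3.1} by treating the recursion as a Halpern-type process governed by the \emph{constant} sequence of mappings $T_n = S$. First I would record that $\Fix(S)$ is nonempty: by hypothesis $F = \Fix(T) \cap \Fix(S) \neq \emptyset$, so in particular $\Fix(S) \neq \emptyset$ and $P_{\Fix(S)}(u)$ is well defined. Setting $T_n = S$ for every $n \in \N$, we have $\bigcap_{n=1}^\infty \Fix(T_n) = \Fix(S)$, and the defining recursion becomes $x_{n+1} = \alpha_n u_n + (1-\alpha_n) T_n x_n$, which is exactly the form treated in \cite{NMJ2016}.

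Next I would verify that the two structural hypotheses required by \cite{NMJ2016}*{Corollary~3.1} hold for $\{T_n\}$, and here Lemma~\ref{l:T_n=T} does all the work. Because $S$ is strongly quasinonexpansive, the second assertion of Lemma~\ref{l:T_n=T} shows that $\{T_n\}$ is strongly quasinonexpansive type (equivalently, a strongly relatively nonexpansive sequence, by Remark~\ref{r:sqnt=srns:F^=(Z)}). Because $I-S$ is demiclosed at $0$, the first assertion of Lemma~\ref{l:T_n=T} gives $\hat{\Fix}(\{T_n\}) = \bigcap_{n=1}^\infty \Fix(T_n) = \Fix(S)$; that is, $\{T_n\}$ satisfies condition~(Z) in the sense of Remark~\ref{r:sqnt=srns:F^=(Z)}. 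Together with the standing assumptions $\{\alpha_n\} \subset (0,1]$, $\alpha_n \to 0$, $\sum_{n=1}^\infty \alpha_n = \infty$, and $u_n \to u$, these are precisely the hypotheses of \cite{NMJ2016}*{Corollary~3.1}.

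Invoking that corollary then yields $x_n \to P_{\bigcap_n \Fix(T_n)}(u) = P_{\Fix(S)}(u)$, which is the claim.

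The only real obstacle is hidden inside the cited corollary: a self-contained argument would first show that $\{x_n\}$ is bounded (an easy induction using quasinonexpansiveness of $S$, exactly as in \eqref{e:bounded}--\eqref{e:bounded2}), then derive the recursive estimate $\norm{x_{n+1}-z}^2 \le (1-\alpha_n)\norm{x_n-z}^2 + 2\alpha_n \ip{u_n-z}{x_{n+1}-z}$ with $z = P_{\Fix(S)}(u)$, and finally establish $\limsup_n \ip{u-z}{x_{n+1}-z} \le 0$. This last step is the hard part: it requires extracting from the strongly quasinonexpansive property that $x_n - Sx_n \to 0$ along a suitable subsequence, passing to a weak cluster point $p$, and using demiclosedness of $I-S$ at $0$ to place $p$ in $\Fix(S)$, whence the projection inequality $\ip{u-z}{p-z} \le 0$ closes the argument (after absorbing $u_n - u \to 0$). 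Packaging exactly this machinery is the purpose of \cite{NMJ2016}*{Corollary~3.1}, so invoking it is the efficient route.
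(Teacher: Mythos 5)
Your proposal is correct and coincides with the paper's own treatment: the paper offers no separate proof but states the theorem as a direct consequence of \cite{NMJ2016}*{Corollary~3.1} and Lemma~\ref{l:T_n=T}, which is exactly your reduction via the constant sequence $T_n = S$ (strong quasinonexpansiveness giving the strongly quasinonexpansive type condition, and demiclosedness of $I-S$ at $0$ giving $\hat{\Fix}(\{T_n\}) = \Fix(S)$). Your closing sketch of what the cited corollary hides is accurate but, as you note, unnecessary once the corollary is invoked.
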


Using Theorem~\ref{t:aoyama-nmj2016}, we obtain the following lemma:

\begin{lemma}\label{l:u_n}
 Let $H$, $C$, $S$, $u$, $\{\alpha_n\}$, and $\{\beta_n\}$ be the same
 as in Theorem~\ref{t:g=Falset-et_al(2)},
 $\{z_n\}$ a bounded sequence in $C$, 
 and $\{x_n\}$ a sequence in $C$ defined by $x_1 \in C$ and
 \begin{equation}\label{162921_16Aug17}
  x_{n+1} =
   \alpha_n u + (1-\alpha_n) [\beta_n z_n + (1-\beta_n) Sx_n ]
 \end{equation}
 for $n \in \N$. 
 Then $\{x_n\}$ converges strongly to $P_{\Fix(S)}(u)$. 
\end{lemma}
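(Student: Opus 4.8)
The plan is to recast the recursion~\eqref{162921_16Aug17} into the Halpern-type form treated by Theorem~\ref{t:aoyama-nmj2016}, that is, into an iteration $x_{n+1} = \gamma_n u_n + (1-\gamma_n) Sx_n$ for a suitable coefficient sequence $\{\gamma_n\}$ and a suitable sequence $\{u_n\}$ in $C$ with $u_n \to u$. First I would expand~\eqref{162921_16Aug17} as
\[
 x_{n+1} = \alpha_n u + (1-\alpha_n)\beta_n z_n + (1-\alpha_n)(1-\beta_n) Sx_n
\]
and observe that the coefficient of $Sx_n$ equals $(1-\alpha_n)(1-\beta_n) = 1 - \gamma_n$, where I set $\gamma_n = \alpha_n + (1-\alpha_n)\beta_n$. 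Defining $u_n = \bigl(\alpha_n u + (1-\alpha_n)\beta_n z_n\bigr)/\gamma_n$ (which is well defined since $\alpha_n > 0$ forces $\gamma_n > 0$) then gives exactly $x_{n+1} = \gamma_n u_n + (1-\gamma_n) Sx_n$.

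Next I would check that $\{\gamma_n\}$ satisfies the hypotheses imposed on the coefficient sequence in Theorem~\ref{t:aoyama-nmj2016}. Since $\alpha_n \le \gamma_n \le \alpha_n + (1-\alpha_n) = 1$, we have $\gamma_n \in (0,1]$; since $\alpha_n \to 0$ and $\beta_n \to 0$, we get $\gamma_n \to 0$; and $\gamma_n \ge \alpha_n$ together with $\sum_{n=1}^\infty \alpha_n = \infty$ yields $\sum_{n=1}^\infty \gamma_n = \infty$. Moreover, $u_n$ is a convex combination of $u$ and $z_n$, because its two coefficients $\alpha_n/\gamma_n$ and $(1-\alpha_n)\beta_n/\gamma_n$ are nonnegative and sum to $\gamma_n/\gamma_n = 1$; hence $u_n \in C$ by convexity of $C$.

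The remaining point, which is the crux of the argument, is to show $u_n \to u$, and this is where the hypothesis $\beta_n/\alpha_n \to 0$ enters. I would write
\[
 u_n - u = \frac{(1-\alpha_n)\beta_n}{\gamma_n}(z_n - u)
\]
and bound the scalar factor using $\gamma_n \ge \alpha_n$ and $1-\alpha_n \le 1$ by $(1-\alpha_n)\beta_n/\gamma_n \le \beta_n/\alpha_n \to 0$. Since $\{z_n\}$ is bounded, $\{z_n - u\}$ is bounded, so $\norm{u_n - u} \to 0$.

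With $u_n \to u$ and the properties of $\{\gamma_n\}$ established, Theorem~\ref{t:aoyama-nmj2016} applied with $\gamma_n$ in place of $\alpha_n$ immediately gives $x_n \to P_{\Fix(S)}(u)$. I do not expect a serious obstacle: once the reformulation $\gamma_n = \alpha_n + (1-\alpha_n)\beta_n$ is spotted, everything reduces to the elementary estimate $(1-\alpha_n)\beta_n/\gamma_n \le \beta_n/\alpha_n$. The only subtlety to keep in mind is that $\{\alpha_n\}$ lies in $(0,1]$, so $\gamma_n > 0$ for every $n$ and $u_n$ is genuinely well defined.
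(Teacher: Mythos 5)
Your proposal is correct and follows essentially the same route as the paper: the same substitution $\gamma_n = \alpha_n + (1-\alpha_n)\beta_n = \alpha_n + \beta_n - \alpha_n\beta_n$, the same $u_n = [\alpha_n u + (1-\alpha_n)\beta_n z_n]/\gamma_n$, and the same appeal to Theorem~\ref{t:aoyama-nmj2016}. You merely spell out the details (the bound $(1-\alpha_n)\beta_n/\gamma_n \leq \beta_n/\alpha_n$ and the verification that $u_n \in C$) that the paper leaves as ``easy to verify.''
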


\begin{proof} 
 Set $\gamma_n = \alpha_n + \beta_n - \alpha_n \beta_n$. 
 Then it is easy to verify that $0<\gamma_n \leq 1$ for all $n \in \N$,
 $\gamma_n \to 0$, and $\sum_{n=1}^\infty \gamma_n = \infty$.
 From the definition of $\{x_n\}$, the equation~\eqref{162921_16Aug17} is
 reduced to  
 \[
 x_{n+1} = \gamma_n u_n + (1-\gamma_n) Sx_n, 
 \]
 where $u_n = [\alpha_n u + (1-\alpha_n) \beta_n z_n]/ \gamma_n$. 
 Since $\beta_n/\alpha_n \to 0$ and $\{z_n\}$ is bounded,
 we see that $u_n \to u$. 
 Therefore Theorem~\ref{t:aoyama-nmj2016} implies the conclusion. 
\end{proof}

Using Lemma~\ref{l:u_n}, we can prove
Theorem~\ref{t:g=Falset-et_al(2)} as follows:

\begin{proof}[Proof of Theorem~\ref{t:g=Falset-et_al(2)}]
 Let $v \in F$ be fixed. Since $T$ is nonexpansive and $v \in \Fix(T)$, 
 it follows from~\eqref{e:bounded} and~\eqref{e:bounded2} that
 \[
 \norm{Tx_{n+1} -v} \leq \norm{x_{n+1} -v}   
 \leq \max\{ \norm{u-v}, \norm{x_1 -v}\}
 \]
 for all $n \in \N$. Thus $\{ Tx_n \}$ is bounded. 
 Hence Lemma~\ref{l:u_n} implies that $x_n \to P_{\Fix(T)}(u)$. 
\end{proof}

\section{Strong convergence theorem for two quasinonexpansive mappings}

In this section, we prove the following strong convergence theorem for
two quasinonexpansive mappings. 

\begin{theorem}\label{t:general3}
 Let $H$ be a Hilbert space, $C$ a nonempty closed convex subset of $H$, 
 $T\colon C\to C$ a quasinonexpansive mapping, 
 $S\colon C \to C$ a strongly quasinonexpansive mapping, 
 $u$ a point in $C$, $\{\alpha_n\}$ a sequence in $(0,1]$, 
 $\{\beta_n\}$ a sequence in $[0,1]$,
 and $\{x_n\}$ a sequence in $C$ defined by $x_1 \in C$ and
 \eqref{e:Falset_etal-org} for $n \in \N$. 
 Suppose that $F = \Fix(T) \cap \Fix(S)$ is nonempty, 
 both $I-T$ and $I-S$ are demiclosed at $0$,
 $\alpha_n \to 0$, $\sum_{n=1}^\infty \alpha_n = \infty$, 
 and $\liminf_n \beta_n(1-\beta_n) > 0$. 
 Then $\{x_n\}$ converges strongly to $P_F (u)$. 
\end{theorem}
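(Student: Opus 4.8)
The plan is to rewrite the recursion~\eqref{e:Falset_etal-org} as a Halpern-type iteration governed by a \emph{sequence} of quasinonexpansive mappings and then invoke the general strong convergence theorem \cite{NMJ2016}*{Corollary 3.1} (the same result underlying Theorem~\ref{t:aoyama-nmj2016}). Concretely, I would set $U_n = \beta_n T + (1-\beta_n) S$ for each $n$, so that~\eqref{e:Falset_etal-org} becomes
\[
 x_{n+1} = \alpha_n u + (1-\alpha_n) U_n x_n.
\]
Since $\liminf_n \beta_n(1-\beta_n) > 0$, there exist $a>0$ and $N\in\N$ with $\beta_n(1-\beta_n) \geq a$ for all $n \geq N$; in particular both $\beta_n$ and $1-\beta_n$ are bounded away from $0$ for large $n$, so $\beta_n \in (0,1)$ eventually and Lemma~\ref{l:convex_T-S} gives that each such $U_n$ is quasinonexpansive with $\Fix(U_n) = F$. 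Since also $p \in \Fix(U_n)$ for every $p \in F$ and every $n$, this yields $\bigcap_{n=1}^\infty \Fix(U_n) = F$, so that the whole framework of \S2 applies to $\{U_n\}$.

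Two properties of $\{U_n\}$ then need to be established, and both rest on a single estimate. Applying the identity~\eqref{e:2uc2us} with $\lambda = \beta_n$ and using the quasinonexpansiveness of $T$ and $S$ at a point $p \in F$ gives
\[
 \norm{U_n w - p}^2 \leq \norm{w - p}^2 - \beta_n(1-\beta_n)\norm{Tw - Sw}^2
\]
for every $w \in C$. First I would verify that $\{U_n\}$ is strongly quasinonexpansive type: if $\{x_n\}$ is bounded and $\norm{x_n - p} - \norm{U_n x_n - p} \to 0$, then the displayed estimate (together with $\liminf_n \beta_n(1-\beta_n) > 0$) forces $Tx_n - Sx_n \to 0$, while the convexity bound $\norm{U_n x_n - p} \leq \beta_n \norm{Tx_n - p} + (1-\beta_n)\norm{Sx_n - p}$ forces $\norm{x_n - p} - \norm{Sx_n - p} \to 0$; since $S$ is strongly quasinonexpansive this yields $Sx_n - x_n \to 0$, hence $Tx_n - x_n \to 0$, and finally $U_n x_n - x_n \to 0$.

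Next I would check the condition~(Z), namely $\hat{\Fix}(\{U_n\}) = F$. Given $z \in \hat{\Fix}(\{U_n\})$, by Remark~\ref{r:sqnt=srns:F^=(Z)} there are a bounded sequence $\{w_n\}$ and a subsequence $\{w_{n_i}\}$ with $U_n w_n - w_n \to 0$ and $w_{n_i} \rightharpoonup z$. From $0 \leq \norm{w_n - p} - \norm{U_n w_n - p} \leq \norm{U_n w_n - w_n} \to 0$ I could rerun the computation above to obtain $S w_n - w_n \to 0$ and $T w_n - w_n \to 0$; the demiclosedness of $I-S$ and $I-T$ at $0$ then gives $Sz = z$ and $Tz = z$, so $z \in F$, while the reverse inclusion $F \subset \hat{\Fix}(\{U_n\})$ is automatic. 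Having shown that $\{U_n\}$ is strongly quasinonexpansive type and satisfies $\hat{\Fix}(\{U_n\}) = F$, and recalling $\alpha_n \to 0$ and $\sum_{n=1}^\infty \alpha_n = \infty$, I would conclude $x_n \to P_F(u)$ by \cite{NMJ2016}*{Corollary 3.1} applied with the constant input $u_n = u$. The main obstacle is the twofold use of the key estimate: the crux is extracting $Sx_n - x_n \to 0$ (not merely $Tx_n - Sx_n \to 0$), for which the strong quasinonexpansiveness of $S$ and the uniform lower bound on $1-\beta_n$ are both indispensable; everything else is bookkeeping.
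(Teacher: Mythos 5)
Your proposal is correct and takes essentially the same route as the paper: the paper likewise sets $U_n = \beta_n T + (1-\beta_n)S$, establishes exactly your three properties in Lemma~\ref{l:U_n:sqnt-(Z)} via the identity~\eqref{e:2uc2us}, the strong quasinonexpansiveness of $S$, and the demiclosedness of $I-T$ and $I-S$, and then concludes by a Halpern-type convergence theorem for strongly quasinonexpansive type sequences satisfying condition~(Z). The only cosmetic differences are that the paper invokes Theorem~\ref{t:AKK2011} (from \cite{MR2960628}*{Theorem~3.1}) rather than your \cite{NMJ2016}*{Corollary~3.1} with $u_n \equiv u$, which serves the same purpose, and that it derives $x_n - Tx_n \to 0$ in the condition~(Z) step by a direct norm manipulation instead of via $Tx_n - Sx_n \to 0$.
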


\begin{remark}
 Under the assumptions of Theorem~\ref{t:Falset-et_al}~(3),
 $T$ is quasinonexpansive and $I-T$ is demiclosed at $0$. 
 Therefore Theorem~\ref{t:Falset-et_al}~(3) is a corollary of
 Theorem~\ref{t:general3}. 
\end{remark}

The proof of Theorem~\ref{t:general3} is based on the following theorem, 
which is derived from~\cite{MR2960628}*{Theorem~3.1} and
Remark~\ref{r:sqnt=srns:F^=(Z)}. 

\begin{theorem}\label{t:AKK2011}
 Let $H$, $C$, $u$, and $\{\alpha_n\}$ be the same as in
 Theorem~\ref{t:general3}, 
 $\{U_n\}$ a sequence of mappings of $C$ into itself, 
 and $\{x_n\}$ a sequence in $C$ defined by $x_1 \in C$ and
 \[
 x_{n+1} = \alpha_n u + (1-\alpha_n) U_n x_n
 \]
 for $n \in \N$. 
 Suppose that $K = \bigcap_{n=1}^\infty \Fix(U_n) \ne \emptyset$,
 $\{U_n\}$ is strongly quasinonexpansive type,
 and $\hat{\Fix}(\{U_n\}) = K$. 
 Then $\{x_n\}$ converges strongly to $P_K (u)$. 
\end{theorem}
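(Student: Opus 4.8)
The plan is to read Theorem~\ref{t:AKK2011} as a restatement, in the terminology of the present paper, of the Halpern-type strong convergence theorem \cite{MR2960628}*{Theorem~3.1}, and to obtain it via the translation of hypotheses recorded in Remark~\ref{r:sqnt=srns:F^=(Z)}. Accordingly, the first step is to invoke that remark as a dictionary: its first item says that ``$\{U_n\}$ is strongly quasinonexpansive type'' is the same as ``$\{U_n\}$ is a strongly relatively nonexpansive sequence'', and its second item says that ``$\hat{\Fix}(\{U_n\}) = K$'' is the same as ``$\{U_n\}$ satisfies the condition~(Z)''. These two equivalences turn the hypotheses of Theorem~\ref{t:AKK2011} into the standing hypotheses on $\{U_n\}$ in \cite{MR2960628}*{Theorem~3.1}.

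Next I would check that the remaining data coincide. The recursion $x_{n+1} = \alpha_n u + (1-\alpha_n) U_n x_n$ is exactly the Halpern-type scheme treated there, with anchor $u \in C$; the control sequence $\{\alpha_n\} \subset (0,1]$ inherits $\alpha_n \to 0$ and $\sum_{n=1}^\infty \alpha_n = \infty$ from Theorem~\ref{t:general3}, which are the usual conditions; and $K = \bigcap_{n=1}^\infty \Fix(U_n)$ is nonempty. Once the dictionary above has been applied, \cite{MR2960628}*{Theorem~3.1} yields directly that $\{x_n\}$ converges strongly to $P_K(u)$.

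Should a self-contained argument be wanted instead, I would run the standard Halpern analysis. Put $z = P_K(u)$. Since each $U_n$ is quasinonexpansive and $K \subset \Fix(U_n)$, a routine induction shows that $\{x_n\}$, and hence $\{U_n x_n\}$, is bounded. Expanding with $x_{n+1} - z = (1-\alpha_n)(U_n x_n - z) + \alpha_n (u - z)$ and using $\norm{U_n x_n - z} \leq \norm{x_n - z}$ gives a recursion $\norm{x_{n+1} - z}^2 \leq (1-\alpha_n) \norm{x_n - z}^2 + \alpha_n \sigma_n$, in which $\sigma_n$ is governed by $\ip{u - z}{x_{n+1} - z}$. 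The heart of the matter is to prove $\limsup_n \ip{u - z}{x_{n+1} - z} \leq 0$: along a subsequence realizing this upper limit, the strongly-quasinonexpansive-type property forces $U_n x_n - x_n \to 0$, so any weak cluster point $w$ of that subsequence belongs to $\hat{\Fix}(\{U_n\}) = K$, whence the projection inequality $\ip{u - z}{w - z} \leq 0$ closes the estimate.

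The main obstacle in the self-contained route is precisely this upper-limit step. It requires extracting a bounded sequence along which $U_n x_n - x_n \to 0$ and identifying its weak limit as a point of $K$, which is exactly where the hypothesis $\hat{\Fix}(\{U_n\}) = K$ (equivalently, condition~(Z)) is indispensable; it also needs a convergence lemma of Xu type allowing $\gamma_n = \alpha_n \sigma_n$ with $\limsup_n \sigma_n \leq 0$, a refinement of Lemma~\ref{lm:seq} not recorded in the excerpt. Since \cite{MR2960628}*{Theorem~3.1} already packages both of these ingredients, the translation route is the economical one and is the plan I would follow.
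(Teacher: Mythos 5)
Your primary route---translating the hypotheses via Remark~\ref{r:sqnt=srns:F^=(Z)} (strongly quasinonexpansive type $\Leftrightarrow$ strongly relatively nonexpansive sequence, $\hat{\Fix}(\{U_n\}) = K \Leftrightarrow$ condition~(Z)) and then invoking \cite{MR2960628}*{Theorem~3.1}---is exactly how the paper derives Theorem~\ref{t:AKK2011}, which it states without further proof. Your sketch of a self-contained Halpern-type argument is a reasonable bonus, but the citation-plus-dictionary plan you settle on coincides with the paper's own treatment.
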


In order to prove Theorem~\ref{t:general3}, we need the following
lemma. For similar results, see \cites{%
MR2377867,MR2581778,MR2884574,MR2529497,MR0470761}. 

\begin{lemma}\label{l:U_n:sqnt-(Z)}
 Let $H$, $C$, $T$, $S$, and $\{\beta_n\}$ be the same as in
 Theorem~\ref{t:general3} and
 $U_n \colon C\to C$ a mapping defined by
 \[
  U_n = \beta_n T + (1-\beta_n) S
 \]
 for $n \in \N$. Then the following hold:
 \begin{enumerate}
  \item Each $U_n$ is quasinonexpansive and
	$\Fix(T) \cap \Fix(S) = \bigcap_{n=1}^\infty \Fix(U_n)$; 
  \item $\{U_n\}$ is strongly quasinonexpansive type;
  \item $\hat{\Fix} (\{ U_n \}) = \bigcap_{n=1}^\infty \Fix(U_n)$. 
 \end{enumerate}
\end{lemma}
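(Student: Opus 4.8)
The plan is to treat the three assertions successively, the main tool being the convexity identity~\eqref{e:2uc2us} together with the hypothesis $\liminf_n \beta_n(1-\beta_n)>0$, which yields a constant $\delta>0$ with $\beta_n(1-\beta_n)\ge\delta$ for all large $n$; in particular $\beta_n\in(0,1)$ for all large $n$, hence for infinitely many $n$. For~(1), I first observe that $U_n$ is quasinonexpansive in every case: if $\beta_n\in(0,1)$ this is Lemma~\ref{l:convex_T-S}, while if $\beta_n\in\{0,1\}$ then $U_n$ is $S$ or $T$, which are quasinonexpansive by hypothesis. The inclusion $F\subset\bigcap_{n}\Fix(U_n)$ is immediate, since a point fixed by both $T$ and $S$ is fixed by each $U_n$. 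For the reverse inclusion I choose an index $m$ with $\beta_m\in(0,1)$, which exists by the $\liminf$ hypothesis, and apply Lemma~\ref{l:convex_T-S} to get $\bigcap_{n}\Fix(U_n)\subset\Fix(U_m)=F$.

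The heart of the lemma is~(2). Assume $\{x_n\}$ is bounded, $p\in F$, and $\norm{x_n-p}-\norm{U_nx_n-p}\to0$. Applying~\eqref{e:2uc2us} to $U_nx_n-p=\beta_n(Tx_n-p)+(1-\beta_n)(Sx_n-p)$ and using $\norm{Tx_n-p}\le\norm{x_n-p}$ and $\norm{Sx_n-p}\le\norm{x_n-p}$, I obtain $\norm{U_nx_n-p}^2\le\norm{x_n-p}^2-\beta_n(1-\beta_n)\norm{Tx_n-Sx_n}^2$. Rearranging and factoring the difference of squares, the term $\beta_n(1-\beta_n)\norm{Tx_n-Sx_n}^2$ is bounded by $(\norm{x_n-p}-\norm{U_nx_n-p})(\norm{x_n-p}+\norm{U_nx_n-p})$; the first factor tends to $0$ and the second is bounded, so $\beta_n(1-\beta_n)\norm{Tx_n-Sx_n}^2\to0$ and hence $Tx_n-Sx_n\to0$ by the $\liminf$ hypothesis. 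Consequently $U_nx_n-Sx_n=\beta_n(Tx_n-Sx_n)\to0$, so $\norm{U_nx_n-p}-\norm{Sx_n-p}\to0$ and therefore $\norm{x_n-p}-\norm{Sx_n-p}\to0$. Now the strong quasinonexpansiveness of $S$ gives $Sx_n-x_n\to0$, and finally $U_nx_n-x_n=\beta_n(Tx_n-Sx_n)+(Sx_n-x_n)\to0$, which is~(2).

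For~(3), the inclusion $\bigcap_{n}\Fix(U_n)\subset\hat{\Fix}(\{U_n\})$ always holds, so I must prove the reverse. Let $z\in\hat{\Fix}(\{U_n\})$. By the third bullet of Remark~\ref{r:sqnt=srns:F^=(Z)} there are a bounded sequence $\{x_n\}$ in $C$ and a subsequence with $U_nx_n-x_n\to0$ and $x_{n_i}\rightharpoonup z$. Since $\{x_n\}$ is bounded and $U_nx_n-x_n\to0$, we have $\norm{x_n-p}-\norm{U_nx_n-p}\to0$ for $p\in F$, so the computation of~(2) applies and delivers $Tx_n-Sx_n\to0$ and $Sx_n-x_n\to0$, whence also $Tx_n-x_n\to0$. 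Passing to the subsequence, $(I-S)x_{n_i}\to0$ and $(I-T)x_{n_i}\to0$ with $x_{n_i}\rightharpoonup z$; the demiclosedness at $0$ of $I-S$ and $I-T$ then yields $Sz=z$ and $Tz=z$, i.e.\ $z\in F=\bigcap_{n}\Fix(U_n)$.

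The step I expect to demand the most care is the implication, inside~(2), that $\norm{x_n-p}-\norm{U_nx_n-p}\to0$ forces $Sx_n-x_n\to0$: one cannot feed the hypothesis on $U_n$ directly into the strong quasinonexpansiveness of $S$. The bridge is the two-sided estimate that first extracts $Tx_n-Sx_n\to0$ and thereby makes $U_nx_n$ and $Sx_n$ asymptotically equal, allowing the defect $\norm{x_n-p}-\norm{Sx_n-p}$ to be controlled. Once this is in place, parts~(2) and~(3) share the same computation, and the remainder is routine manipulation with~\eqref{e:2uc2us}, the triangle inequality, and demiclosedness.
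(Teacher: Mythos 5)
Your proof is correct, and it shares the paper's overall skeleton: part~(1) via Lemma~\ref{l:convex_T-S}, part~(2) via the identity~\eqref{e:2uc2us} combined with the strong quasinonexpansiveness of $S$, and part~(3) via the bounded-sequence refinement in Remark~\ref{r:sqnt=srns:F^=(Z)} followed by demiclosedness. The internal route through~(2) and~(3) is, however, genuinely different in its ordering. The paper extracts $a \leq \beta_n \leq b$ from the $\liminf$ hypothesis and runs two parallel estimates: first the linear bound $0 \leq (1-b)(\norm{x_n-p} - \norm{Sx_n-p}) \leq \norm{x_n-p} - \norm{U_n x_n - p}$ (its~\eqref{161425_30Aug17}) to obtain $x_n - Sx_n \to 0$, and separately the convexity identity with the factor $a(1-b)$ to obtain $Tx_n - Sx_n \to 0$; in~(3) it then needs yet another coefficient estimate, $a\norm{x_n - Tx_n} \leq \norm{x_n - U_n x_n} + \norm{x_n - Sx_n}$, to conclude $z \in \Fix(T)$. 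You invert this: you first extract $Tx_n - Sx_n \to 0$ directly from $\beta_n(1-\beta_n) \geq \delta$ via the difference-of-squares factorization, then transfer the defect from $U_n$ to $S$ through the exact identity $U_n x_n - Sx_n = \beta_n(Tx_n - Sx_n) \to 0$, so that $\norm{x_n - p} - \norm{Sx_n - p} \to 0$ follows by the triangle inequality and the paper's linear estimate~\eqref{161425_30Aug17} becomes unnecessary. This pays off again in~(3), which in your version reuses the computation of~(2) verbatim and gets $Tx_n - x_n \to 0$ from $Tx_n - Sx_n \to 0$ and $Sx_n - x_n \to 0$ by the triangle inequality, with no fresh lower bound on $\beta_n$; you also only ever need the hypothesis in the form $\beta_n(1-\beta_n) \geq \delta$ rather than the two-sided extraction $a \leq \beta_n \leq b$. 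As a bonus, your explicit treatment in~(1) of the indices with $\beta_n \in \{0,1\}$ (where $U_n$ is $S$ or $T$) is more careful than the paper's terse ``and hence.'' Both arguments are equally rigorous; yours is marginally more economical, the paper's perhaps slightly more transparent in isolating the $S$-defect at the outset.
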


\begin{proof}
 By assumption, there exist $a,b \in (0,1)$ and $m \in \N$ such that
 $a\leq \beta_n \leq b$ for all $n \geq m$. 

 We first show (1). Set $F = \Fix(T) \cap \Fix(S)$. 
 Lemma~\ref{l:convex_T-S} implies that
 $U_n$ is quasinonexpansive and $\Fix(U_n) = F$ for all $n \geq m$, 
 and hence $U_n$ is quasinonexpansive and $\Fix(U_n) \supset F$ for all
 $n \in \N$. Thus we have $F \subset \bigcap_{n=1}^\infty \Fix(U_n) 
 \subset \bigcap_{n=m}^\infty \Fix(U_n) = F$. 
 Therefore, $F = \bigcap_{n=1}^\infty \Fix(U_n)$.  

 We next show (2). Suppose that $\{x_n\}$ is a bounded sequence in $C$
 and $\norm{x_n - p} - \norm{U_n x_n -p}\to 0$ for some
 $p \in \bigcap_{n=1}^\infty \Fix(U_n)$. 
 From~(1), it is enough to show that $x_n - U_n x_n \to 0$. 
 Since $S$ and $T$ are quasinonexpansive and $p \in F$ by (1), we have
 \begin{align*}
  \norm{U_n x_n - p}
  &\leq \beta_n \norm{Tx_n -p} + (1-\beta_n) \norm{Sx_n -p }\\
  &\leq \beta_n \norm{x_n -p} + (1-\beta_n) \norm{Sx_n -p }
  \leq \norm{x_n -p}. 
 \end{align*}
 Hence we see that
 \begin{equation}\label{161425_30Aug17}
  0 \leq (1-b)(\norm{x_n -p} - \norm{Sx_n - p})
 \leq \norm{x_n -p} - \norm{U_n x_n - p}
 \end{equation}
 for all $n \geq m$. 
 Since $\norm{x_n -p} - \norm{Sx_n - p} \to 0$
 by~\eqref{161425_30Aug17}, 
 $S$ is strongly quasinonexpansive, and $p \in \Fix(S)$ by~(1), 
 we conclude that
 \begin{equation}\label{e:x_n-Sx_n-to-0}
  \norm{x_n - Sx_n} \to 0. 
 \end{equation}
 Using~\eqref{e:2uc2us} and (1), we have
 \begin{align*}
  \beta_n (1-\beta_n) \norm{Tx_n -Sx_n}^2
  &= \beta_n \norm{Tx_n -p}^2 + (1-\beta_n) \norm{Sx_n -p}^2 
  - \norm{U_n x_n - p}^2\\
  &\leq \norm{x_n -p}^2 - \norm{U_n x_n - p}^2 \\
  &\leq 2\norm{x_n -p} (\norm{x_n -p}  - \norm{U_n x_n - p}), 
 \end{align*}
 which implies that
 \[
 0\leq a(1-b) \norm{Tx_n -Sx_n}^2
 \leq 2\norm{x_n -p} (\norm{x_n -p}  - \norm{U_n x_n - p})
 \]
 for all $n \geq m$. 
 Taking into account the boundedness of $\{x_n\}$, we see that
 \begin{equation}\label{e:Tx_n-Sx_n-to-0}
  \norm{Tx_n - Sx_n} \to 0. 
 \end{equation}
 Therefore it follows from~\eqref{e:x_n-Sx_n-to-0}
 and~\eqref{e:Tx_n-Sx_n-to-0} that 
 \[
 \norm{x_n - U_n x_n}
 \leq \norm{x_n - Sx_n} + \beta_n \norm{Sx_n - Tx_n} \to 0.
 \]
 Consequently, $\{U_n\}$ is strongly quasinonexpansive type. 

 We next show (3). From (1), it is enough to show that $\hat{\Fix}(\{
 U_n \}) \subset F$, where $F = \Fix(T) \cap \Fix(S)$. 
 Let $z \in \hat{\Fix}(\{ U_n \})$.
 Then there exist a bounded sequence $\{ x_n \}$ in $C$ and
 a subsequence $\{x_{n_i}\}$ of $\{x_n\}$ such that
 $U_n x_n - x_n \to 0$ and $x_{n_i} \rightharpoonup z$; 
 see Remark~\ref{r:sqnt=srns:F^=(Z)}. 
 Let $p \in F$. As in the proof of~(2), it turns out that
 \[
 0 \leq (1-b)(\norm{x_n -p} - \norm{Sx_n - p})
 \leq \norm{x_n -p} - \norm{U_n x_n - p}
 \leq \norm{x_n - U_n x_n}
 \]
 for all $n \geq m$, 
 and hence \eqref{e:x_n-Sx_n-to-0} holds. 
 Since $I-S$ is demiclosed at $0$, we conclude that $z \in \Fix(S)$. 
 On the other hand, by the definition of $U_n$, we see that 
 \begin{align*}
  0 \leq a \norm{x_n - Tx_n} &\leq \beta_n \norm{x_n - Tx_n}
  = \norm{x_n - U_n x_n - (1-\beta_n) (x_n - Sx_n)}\\
  &\leq \norm{x_n - U_n x_n} + \norm{x_n - Sx_n}
 \end{align*}
 for all $n \geq m$. 
 Taking into account $U_n x_n -x_n \to 0$ and \eqref{e:x_n-Sx_n-to-0}, 
 we deduce that $\norm{x_n - Tx_n} \to 0$. 
 Since $I-T$ is demiclosed at $0$, we conclude that $z \in \Fix(T)$. 
 Consequently, $z \in \Fix(T) \cap \Fix(S) = F$. 
 This completes the proof.
\end{proof}

Using Theorem~\ref{t:AKK2011} and Lemma~\ref{l:U_n:sqnt-(Z)}, we can
prove Theorem~\ref{t:general3} as follows: 

\begin{proof}[Proof of Theorem~\ref{t:general3}]
 Set $U_n = \beta_n T + (1-\beta_n)S$ for $n \in \N$.
 Then Lemma~\ref{l:U_n:sqnt-(Z)} shows that
 $F = \bigcap_{n=1}^\infty \Fix(U_n)$,
 $\{U_n\}$ is strongly quasinonexpansive type,
 and $\hat{\Fix}(\{U_n\}) = F$. 
 Thus Theorem~\ref{t:AKK2011} implies the conclusion.
\end{proof}

\begin{bibdiv}
\begin{biblist}

\bib{MR2762191}{incollection}{
      author={Aoyama, Koji},
       title={An iterative method for a variational inequality problem over the
  common fixed point set of nonexpansive mappings},
        date={2010},
   booktitle={Nonlinear analysis and convex analysis},
   publisher={Yokohama Publ., Yokohama},
       pages={21\ndash 28},
}

\bib{MR2762173}{incollection}{
      author={Aoyama, Koji},
       title={An iterative method for fixed point problems for sequences of
  nonexpansive mappings},
        date={2010},
   booktitle={Fixed point theory and its applications},
   publisher={Yokohama Publ., Yokohama},
       pages={1\ndash 7},
}

\bib{MR3013135}{incollection}{
      author={Aoyama, Koji},
       title={Asymptotic fixed points of sequences of quasi-nonexpansive type
  mappings},
        date={2011},
   booktitle={Banach and function spaces {III} ({ISBFS} 2009)},
   publisher={Yokohama Publ., Yokohama},
       pages={343\ndash 350},
}

\bib{pNLMUA2011}{incollection}{
      author={Aoyama, Koji},
       title={Halpern's iteration for a sequence of quasinonexpansive type
  mappings},
        date={2011},
   booktitle={Nonlinear mathematics for uncertainty and its applications},
      editor={Li, Shoumei},
      editor={Wang, Xia},
      editor={Okazaki, Yoshiaki},
      editor={Kawabe, Jun},
      editor={Murofushi, Toshiaki},
      editor={Guan, Li},
   publisher={Springer Berlin Heidelberg},
     address={Berlin, Heidelberg},
       pages={387\ndash 394},
         url={http://dx.doi.org/10.1007/978-3-642-22833-9_47},
}

\bib{pNACA2011}{incollection}{
      author={Aoyama, Koji},
       title={Approximations to solutions of the variational inequality problem
  for inverse-strongly-monotone mappings},
        date={2013},
   booktitle={Nonlinear analysis and convex analysis -{I}-},
   publisher={Yokohama Publ., Yokohama},
       pages={1\ndash 9},
}

\bib{pNACA2015}{incollection}{
      author={Aoyama, Koji},
       title={Strongly quasinonexpansive mappings},
        date={2016},
   booktitle={Nonlinear analysis and convex analysis},
   publisher={Yokohama Publ., Yokohama},
       pages={19\ndash 27},
}

\bib{NMJ2016}{article}{
      author={Aoyama, Koji},
       title={Viscosity approximation method for quasinonexpansive mappings
  with contraction-like mappings},
        date={2016},
     journal={Nihonkai Math. J.},
      volume={27},
       pages={168\ndash 180},
}

\bib{MR2799767}{article}{
      author={Aoyama, Koji},
      author={Kimura, Yasunori},
       title={Strong convergence theorems for strongly nonexpansive sequences},
        date={2011},
        ISSN={0096-3003},
     journal={Appl. Math. Comput.},
      volume={217},
       pages={7537\ndash 7545},
         url={http://dx.doi.org/10.1016/j.amc.2011.01.092},
}

\bib{MR3203624}{incollection}{
      author={Aoyama, Koji},
      author={Kimura, Yasunori},
       title={A note on the hybrid steepest descent methods},
        date={2013},
   booktitle={Fixed point theory and its applications},
   publisher={Casa C\u ar\c tii de \c Stiin\c t\u a, Cluj-Napoca},
       pages={73\ndash 80},
}

\bib{MR3185784}{article}{
      author={Aoyama, Koji},
      author={Kimura, Yasunori},
       title={Viscosity approximation methods with a sequence of contractions},
        date={2014},
        ISSN={0716-7776},
     journal={Cubo},
      volume={16},
       pages={9\ndash 20},
}

\bib{MR2960628}{article}{
      author={Aoyama, Koji},
      author={Kimura, Yasunori},
      author={Kohsaka, Fumiaki},
       title={Strong convergence theorems for strongly relatively nonexpansive
  sequences and applications},
        date={2012},
        ISSN={1906-9685},
     journal={J. Nonlinear Anal. Optim.},
      volume={3},
       pages={67\ndash 77},
}

\bib{MR2377867}{article}{
      author={Aoyama, Koji},
      author={Kimura, Yasunori},
      author={Takahashi, Wataru},
      author={Toyoda, Masashi},
       title={On a strongly nonexpansive sequence in {H}ilbert spaces},
        date={2007},
        ISSN={1345-4773},
     journal={J. Nonlinear Convex Anal.},
      volume={8},
       pages={471\ndash 489},
}

\bib{MR2581778}{incollection}{
      author={Aoyama, Koji},
      author={Kimura, Yasunori},
      author={Takahashi, Wataru},
      author={Toyoda, Masashi},
       title={Strongly nonexpansive sequences and their applications in
  {B}anach spaces},
        date={2008},
   booktitle={Fixed point theory and its applications},
   publisher={Yokohama Publ., Yokohama},
       pages={1\ndash 18},
}

\bib{MR3258665}{article}{
      author={Aoyama, Koji},
      author={Kohsaka, Fumiaki},
       title={Strongly relatively nonexpansive sequences generated by firmly
  nonexpansive-like mappings},
        date={2014},
        ISSN={1687-1812},
     journal={Fixed Point Theory Appl.},
       pages={2014:95, 13},
         url={http://dx.doi.org/10.1186/1687-1812-2014-95},
}

\bib{MR3213161}{article}{
      author={Aoyama, Koji},
      author={Kohsaka, Fumiaki},
       title={Viscosity approximation process for a sequence of
  quasinonexpansive mappings},
        date={2014},
        ISSN={1687-1812},
     journal={Fixed Point Theory Appl.},
       pages={2014:17, 11},
         url={http://dx.doi.org/10.1186/1687-1812-2014-17},
}

\bib{MR2671943}{article}{
      author={Aoyama, Koji},
      author={Kohsaka, Fumiaki},
      author={Takahashi, Wataru},
       title={Shrinking projection methods for firmly nonexpansive mappings},
        date={2009},
        ISSN={0362-546X},
     journal={Nonlinear Anal.},
      volume={71},
       pages={e1626\ndash e1632},
         url={http://dx.doi.org/10.1016/j.na.2009.02.001},
}

\bib{MR2884574}{incollection}{
      author={Aoyama, Koji},
      author={Kohsaka, Fumiaki},
      author={Takahashi, Wataru},
       title={Strong convergence theorems by shrinking and hybrid projection
  methods for relatively nonexpansive mappings in {B}anach spaces},
        date={2009},
   booktitle={Nonlinear analysis and convex analysis},
   publisher={Yokohama Publ., Yokohama},
       pages={7\ndash 26},
}

\bib{MR2529497}{article}{
      author={Aoyama, Koji},
      author={Kohsaka, Fumiaki},
      author={Takahashi, Wataru},
       title={Strongly relatively nonexpansive sequences in {B}anach spaces and
  applications},
        date={2009},
        ISSN={1661-7738},
     journal={J. Fixed Point Theory Appl.},
      volume={5},
       pages={201\ndash 224},
         url={http://dx.doi.org/10.1007/s11784-009-0108-7},
}

\bib{MR2358984}{article}{
      author={Aoyama, Koji},
      author={Takahashi, Wataru},
       title={Strong convergence theorems for a family of relatively
  nonexpansive mappings in {B}anach spaces},
        date={2007},
        ISSN={1583-5022},
     journal={Fixed Point Theory},
      volume={8},
       pages={143\ndash 160},
}

\bib{1703.02218}{article}{
      author={Aoyama, Koji},
      author={Zembayashi, Kei},
       title={Strongly quasinonexpansive mappings, {II}},
        date={2017},
      eprint={arXiv:1703.02218 [math.FA]},
}

\bib{MR1402593}{article}{
      author={Bauschke, Heinz~H.},
       title={The approximation of fixed points of compositions of nonexpansive
  mappings in {H}ilbert space},
        date={1996},
        ISSN={0022-247X},
     journal={J. Math. Anal. Appl.},
      volume={202},
       pages={150\ndash 159},
         url={http://dx.doi.org/10.1006/jmaa.1996.0308},
}

\bib{MR0470761}{article}{
      author={Bruck, Ronald~E.},
      author={Reich, Simeon},
       title={Nonexpansive projections and resolvents of accretive operators in
  {B}anach spaces},
        date={1977},
        ISSN={0362-1588},
     journal={Houston J. Math.},
      volume={3},
       pages={459\ndash 470},
}

\bib{MR0324491}{article}{
      author={Bruck, Ronald~E., Jr.},
       title={Properties of fixed-point sets of nonexpansive mappings in
  {B}anach spaces},
        date={1973},
        ISSN={0002-9947},
     journal={Trans. Amer. Math. Soc.},
      volume={179},
       pages={251\ndash 262},
}

\bib{MR0298499}{article}{
      author={Dotson, W.~G., Jr.},
       title={Fixed points of quasi-nonexpansive mappings},
        date={1972},
        ISSN={0263-6115},
     journal={J. Austral. Math. Soc.},
      volume={13},
       pages={167\ndash 170},
}

\bib{MR3453020}{article}{
      author={Falset, Jes\'us~Garcia},
      author={Llorens-Fuster, Enrique},
      author={Marino, Giuseppe},
      author={Rugiano, Angela},
       title={On strong convergence of {H}alpern's method for
  quasi-nonexpansive mappings in {H}ilbert spaces},
        date={2016},
        ISSN={1392-6292},
     journal={Math. Model. Anal.},
      volume={21},
       pages={63\ndash 82},
         url={http://dx.doi.org/10.3846/13926292.2016.1132787},
}

\bib{MR1074005}{book}{
      author={Goebel, Kazimierz},
      author={Kirk, W.~A.},
       title={Topics in metric fixed point theory},
      series={Cambridge Studies in Advanced Mathematics},
   publisher={Cambridge University Press, Cambridge},
        date={1990},
      volume={28},
        ISBN={0-521-38289-0},
         url={http://dx.doi.org/10.1017/CBO9780511526152},
}

\bib{MR1353071}{article}{
      author={Liu, Li~Shan},
       title={Ishikawa and {M}ann iterative process with errors for nonlinear
  strongly accretive mappings in {B}anach spaces},
        date={1995},
        ISSN={0022-247X},
     journal={J. Math. Anal. Appl.},
      volume={194},
       pages={114\ndash 125},
         url={http://dx.doi.org/10.1006/jmaa.1995.1289},
}

\bib{MR1386686}{incollection}{
      author={Reich, Simeon},
       title={A weak convergence theorem for the alternating method with
  {B}regman distances},
        date={1996},
   booktitle={Theory and applications of nonlinear operators of accretive and
  monotone type},
      series={Lecture Notes in Pure and Appl. Math.},
      volume={178},
   publisher={Dekker, New York},
       pages={313\ndash 318},
}

\bib{MR2548424}{book}{
      author={Takahashi, Wataru},
       title={Introduction to nonlinear and convex analysis},
   publisher={Yokohama Publishers, Yokohama},
        date={2009},
        ISBN={978-4-946552-35-9},
}

\bib{MR1156581}{article}{
      author={Wittmann, Rainer},
       title={Approximation of fixed points of nonexpansive mappings},
        date={1992},
        ISSN={0003-889X},
     journal={Arch. Math. (Basel)},
      volume={58},
       pages={486\ndash 491},
         url={http://dx.doi.org/10.1007/BF01190119},
}

\end{biblist}
\end{bibdiv}

\end{document}